\newcommand{\dist}{\text{dist}} 
\newcommand{\diam}{\text{{\rm diam}}}
\numberwithin{equation}{section}
\numberwithin{equation}{subsection}
\newtheorem{theorem}{Theorem}[section]
\newtheorem{lemma}[theorem]{Lemma}
\newtheorem{proposition}[theorem]{Proposition}
\newtheorem{definition}[theorem]{Definition}
\newtheorem{remark}[theorem]{Remark}
\numberwithin{equation}{section}
\numberwithin{equation}{subsection}
\begin{document}

\title[Fractional Infinity Laplacian with Obstacle]{Fractional Infinity Laplacian with Obstacle}
\author[Samer Dweik and Ahmad Sabra]{Samer Dweik and Ahmad Sabra\\}
\thanks{MSC Classification Codes: 35D40, 35J60, 35J65.}
\email{dweik@qu.edu.qa, asabra@aub.edu.lb}

\begin{abstract}

This paper deals with the obstacle problem for the fractional infinity Laplacian with nonhomogeneous term $f(u)$, where $f:\mathbb{R}^+ \mapsto \mathbb{R}^+$:
$$\begin{cases} L[u]=f(u) &\qquad in \,\,\,\,\, \{u>0\}\\
u \geq 0 &\qquad in \,\,\,\,\, \Omega\\
u=g &\qquad on\,\,\,\,\, \partial \Omega\end{cases},$$
with 
$$L[u](x)=\sup_{y\in \Omega,\,y\neq x}\dfrac{u(y)-u(x)}{|y-x|^{\alpha}}\,+\,\inf_{y\in \Omega,\,y\neq x} \dfrac{u(y)-u(x)}{|y-x|^\alpha},\qquad 0<\alpha<1.$$\\
Under the assumptions that $f$ is a continuous and monotone function and that the boundary datum $g$ is in $C^{0,\beta}(\partial\Omega)$ for some $0<\beta<\alpha$, we prove existence of a solution $u$ to this problem. Moreover, this solution $u$ is $\beta-$H\"olderian on $\overline{\Omega}$. Our proof is based on an approximation of $f$ by an appropriate sequence of functions $f_\varepsilon$ where we prove using Perron's method the existence of solutions $u_\varepsilon$, for every $\varepsilon>0$. Then, we show some uniform H\"older estimates on $u_\varepsilon$ that guarantee that $u_\varepsilon \rightarrow u$ where this limit function $u$ turns out to be a solution to our obstacle problem.
\end{abstract}

\keywords{Fractional Infinity Laplacian, Viscosity solutions, Non-local and Non-linear equations, Obstacle Problem}

\maketitle

\tableofcontents

\section{Introduction}
 The analysis of solutions to the infinity Laplacian equations dates back to the early results of {{Aronsson}} in \cite{Aronson1,Aronson2}. Let $\Omega$ be a Lipschitz domain in $\mathbb{R}^n$ and $g$ be a Lipschitz function on $\partial \Omega$. Then, the optimal Lipschitz extension $u$ of the boundary datum $g$ minimizing the $L^\infty-$norm of the gradient of $u$ on ${\Omega}$ (i.e. $||\nabla u||_{L^\infty(\Omega)}$) is a solution in the viscosity sense of the following Dirichlet infinity Laplacian boundary value problem: 
\begin{equation} \label{Infinity Laplacian}
\begin{cases}
\Delta_{\infty} u:= D^2 u \,\nabla u \cdot \nabla u =0\qquad &\text{in}\,\,\,\, \Omega\\
u=g & \text{on}\,\,\,\, \partial \Omega
\end{cases}.
\end{equation}
Generalization to the Aronsson Functional $||F(x,u,\nabla u)||_{L^\infty(\Omega)}$ has been also extensively studied in \cite{BarronBook,BarronJensen,Crandall}.

From \cite{Manfredi}, the solution $u$ to the infinity Laplacian problem \eqref{Infinity Laplacian} can also be obtained as the limit when $p\to \infty$ of the minimizers $u_p$ of the $p$-Laplacian mimimization problem $$\min\left\{\int_\Omega |\nabla u|^p\,:\,u \in W^{1,p}(\Omega),\, u=g\,\,\,\mbox{on}\,\,\,\partial\Omega\right\}.$$

On the other hand, the fractional Laplacian operator is a non-local operator which appears in many differential equations related to non-local tug-of-war game \cite{Cafarelli,Peres}, optimal control problems \cite{Antil}, image processing \cite{Aleotti}, SQG and porous medium models \cite{Abdo,Vazquez}.
In \cite{Chambole}, the authors studied the limit of the fractional $p-$Laplacian when $p \to \infty$. More precisely, they consider the minimization problem
\begin{equation} \label{eq:variational}
    \min\left\{\iint_{\Omega\times \Omega} \dfrac{|u(x)-u(y)|^p}{|x-y|^{\alpha p}}\,\mathrm{d}x\,\mathrm{d}y\,:\,u \in W^{s,p}(\Omega),\,u=g\,\,\,\mbox{on}\,\,\,\partial\Omega\right\},
\end{equation}
where 
\,$\alpha\in (0,1]$ is fixed, \,$s=\alpha-\dfrac{n}{p}$,\, $g\in C^{0,\alpha}(\partial \Omega)$\, and the fractional Sobolev space $W^{s,p}(\Omega)$ is defined as follows: 
$$W^{s,p}(\Omega)=\left\{u\in L^p(\Omega): \|u\|_{p}+[u]_{s,p,\Omega}<\infty\right\}$$ where 
    $$[u]_{s,p,\Omega}=\left(\iint_{\Omega\times \Omega}\dfrac{|u(x)-u(y)|^p}{|x-y|^{sp+n}}\, \mathrm{d} x\,\mathrm{d}y\right)^{1/p}.$$\\
   Let $u_p$ be the unique minimizer of Problem \eqref{eq:variational}. Then, it is easy to see that $u_p$ solves the following Euler Lagrange equation:
for any test function $\varphi\in C_0^{\infty}({\Omega})$, one has 
$$
\iint_{\Omega\times \Omega}\dfrac{|u(x)-u(y)|^{p-1}}{|x-y|^\alpha}\mbox{sgn}(u(x)-u(y))\,(\varphi(x)-\varphi(y))\,\mathrm{d}x\,\mathrm{d}y=0
$$
where $\mbox{sgn}(s)=\frac{s}{|s|}$ for $s \neq 0$.
It is then proved in \cite[Proposition 6.4]{Chambole} that 
$u_p$ is a viscosity solution of the equation:
\begin{equation} \label{L-p viscosity solution}
L_p[u]:=\int_{\Omega} \bigg|\dfrac{u(x)-u(y)}{|x-y|^{\alpha }}\bigg|^{p-1}\frac{\mbox{sgn}(u(x)-u(y))}{|x-y|^\alpha}\, \mathrm{d}y=0.
\end{equation}
From \cite[Theorem 1.1]{Chambole}, $u_p$ converges uniformly to a function $u_{\infty}\in C^{0,\alpha}(\overline \Omega)$ which is a viscosity solution to the H\"older (or fractional) infinity Laplace equation (we can see this operator $L$ as the limit of $L_p$ when $p \to \infty$): 
\begin{equation} \label{fractional infinity Lap}
L[u](x):=\sup_{y\in \Omega,\,y\neq x}\dfrac{u(y)-u(x)}{|y-x|^{\alpha}}+\inf_{y\in \Omega,\,y\neq x} \dfrac{u(y)-u(x)}{|y-x|^\alpha}=0.
\end{equation}
Moreover, $u_\infty$ is an optimal H\"older extension of the boundary datum $g \in C^{0,\alpha}(\partial\Omega)$, in the sense that the H\"older seminorm $[u_\infty]_{\alpha,\Omega}$ is always less than or equal $[u]_{\alpha,\Omega}$ for any $\alpha-$H\"older function $u$ such that $u=g$ on $\partial \Omega$, where
$$[u]_{\alpha,\Omega}=\sup_{x,y \in \Omega,\,x\neq y}\dfrac{|u(x)-u(y)|}{|x-y|^{\alpha}}.$$
In \cite{Merida}, the authors have considered the associated Dirichlet obstacle problem to \eqref{fractional infinity Lap}, i.e. they studied the fractional infinity Laplacian problem but in the presence of an obstacle $\psi$:
\begin{equation} \label{Frac inf with obs}
\begin{cases} L[u]=0 &\qquad in \,\,\,\,\,\{u>\psi\},\\
 L[u]\leq 0 &\qquad in \,\,\,\,\,\{u=\psi\},\\
  u \geq \psi &\qquad in \,\,\,\,\,\Omega,\\
 u=g &\qquad on\,\,\,\,\, \partial \Omega.\end{cases}
 \end{equation}
 \\
Following the approximation of \eqref{fractional infinity Lap} by the fractional $p-$Laplacian as in \cite[Section 6]{Chambole}, the authors in \cite{Merida} proved existence of a viscosity solution to \eqref{Frac inf with obs} by studying the limit when $p \to \infty$ of the following fractional $p-$Laplacian problem with obstacle:  
\begin{equation} \label{eq:variational obs}
    \min\bigg\{\iint_{\Omega\times \Omega} \dfrac{|u(x)-u(y)|^p}{|x-y|^{\alpha p}}\,\mathrm{d}x\,\mathrm{d}y\,:\,u \in W^{s,p}(\Omega),\,u \geq \psi\,\,\,\mbox{in}\,\,\,\Omega,\,u=g\,\,\,\mbox{on}\,\,\,\partial\Omega\bigg\}.
\end{equation}
On the other side, we note that the existence of a solution to the nonhomogeneous fractional infinity Laplacian, i.e. to equation \eqref{fractional infinity Lap} but with right hand term $f(x)$, cannot be obtained by means of a $p-$Laplacian approximation. However, the authors of \cite{Chambole} have also considered the nonhomogeneous version of \eqref{fractional infinity Lap}:
\begin{equation} \label{nonhomogeneous frac infinity Lap}
\begin{cases} L[u]=f(x) &\qquad \text{in} \,\,\,\,\,\Omega,\\ u=g &\qquad \text{on}\,\,\,\,\, \partial \Omega.\end{cases}
\end{equation}
In fact, they prove that if $f \in C(\Omega) \cap L^\infty(\Omega)$ and $g \in C(\partial\Omega)$, then a viscosity solution $u \in C(\overline{\Omega})$ to Problem \eqref{nonhomogeneous frac infinity Lap} exists. Moreover, they show that solutions $u$ are locally $\beta-$H\"older continuous, for any $0<\beta<\alpha$, and a global $\beta-$H\"older estimate was also obtained when $g\in C^{0,\beta}(\partial \Omega)$. In addition, there is a partial result in \cite{Chambole} about the uniqueness of the solution $u$ to \eqref{nonhomogeneous frac infinity Lap}. In the homogeneous case (i.e. when $f=0$), the solution $u$ is unique and locally Lipschitz (see \cite[Theorem 1.5]{Chambole}) and an implicit representation of this solution $u$ has been proven; 
$u(x)$ is the unique root \,$r$\, to the following equation: 
$$\sup_{y\in \partial\Omega}\,\dfrac{g(y)-r}{|y-x|^{\alpha}}\,+\,\inf_{y\in \partial\Omega} \dfrac{g(y)-r}{|y-x|^\alpha}=0. $$
\\
{ The uniqueness of the solution to $L[u]=f(x)$ when $f$ is signed, continuous and bounded on $\Omega$ is studied under some growth condition on the solution outside the domain $\Omega$ in \cite{Fejne}, except there the operator $L$ is slightly different where the supremum and the infimum are taken over the whole space $\mathbb{R}^n$}. The uniqueness in the general nonhomogeneous case Problem \eqref{fractional infinity Lap} is still widely an open question.  Moreover, the optimal $C^{0,\alpha}-$regularity of the solution remain open for general functions $f$.\\ 


Motivated by the results of \cite{Chambole}, we study in this paper the fractional infinity Laplacian equation but with nonhomogeneous term $f(u)$ that depends on the solution $u$. To be more precise, we aim to prove the
existence of a solution $u$ to the following equation that satisfies also the Dirichlet boundary condition $u=g$ on $\partial\Omega$:
\begin{equation} \label{Frac Inf Lap Eq}
L[u]=f(u) \qquad \mbox{in}\,\,\,\,\Omega.
\end{equation}
We note that the dependence of the right hand term $f(u)$ on the solution itself makes the problem more complicated. So, the question here is to find the good assumptions on $f$ that guarantee the existence of a solution to \eqref{Frac Inf Lap Eq}. Like in \cite{Chambole}, the continuity of $f$ will be  essential here too. But, we will not assume that $f$ is bounded (which is a required condition in \cite{Chambole}). However, we will impose a monotonicity condition on $f$ and prove by the mean of maximum principle that if $f$ is monotone and $g$ is $\beta-$H\"older continuous then a solution $u$ to \eqref{Frac Inf Lap Eq} exists satisfying $u=g$ on 
$\partial \Omega.$
Local and global H\"older regularity of solutions will be also proved.

In addition, we will consider equation \eqref{Frac Inf Lap Eq} but in the presence of an obstacle. Concretely, we will prove existence of a function $u$ that is nonnegative over $\Omega$ (here $u \geq 0$ represents the obstacle), that takes the datum $g$ on $\partial\Omega$, and solves the following equation \eqref{Frac Inf Lap Eq} but inside the positivity set $\{u>0\}$:  
\begin{equation} \label{Frac Inf Lap Eq Obstacle}
L[u]=f(u) \qquad \mbox{in}\,\,\,\,\{u>0\}.
\end{equation}

The paper is organized as follows.  In Section \ref{sec: Preliminary}, we show some properties on the operator $L$. In particular, we show that the function $|x-x_0|^{\beta}$ (where $\beta\leq \alpha$) is a strict subsolution to \eqref{fractional infinity Lap}; this will be fundamental in our later analysis. In section \ref{sec:viscosity}, we introduce the notion of viscosity (sub/super) solution to \eqref{Frac Inf Lap Eq} and show in Proposition \ref{comparison principle} the comparison principle. Moreover, we will prove a stability result on subsolutions. We also develop a Perron's Method argument in Section \ref{sec:Perron}
and prove the following existence and regularity results. 
\begin{theorem}\label{thm:Main 1}
    Assume $f:\mathbb{R} \mapsto \mathbb{R}$ is continuous and monotone non-decreasing, and the boundary datum $g$ is {{continuous}} on $\partial\Omega$. Then, the following fractional infinity Laplacian problem:
$$\begin{cases} L[u]=f(u) &\qquad in \,\,\,\,\,\Omega,\\ u=g &\qquad on\,\,\,\,\, \partial \Omega.\end{cases}$$
    has a solution $u$, {{which is locally $\beta-$H\"olderian for any $0<\beta <\alpha \leq 1$. Moreover, $u \in C^{0,\beta}(\overline{\Omega})$ as soon as $g \in C^{0,\beta}(\partial\Omega)$. In addition, $u$ is locally $\alpha-$H\"olderian if $f$ is nonnegative, for any $\alpha \in (0,1)$.}} 
\end{theorem}
We note that the solution constructed in the proof of Theorem \ref{thm:Main 1} is non-negative when both $f$ and $g$ are non-negative; this will allow us to introduce the obstacle problem in Section \ref{sec:obstacle} and show the following second main result of the paper.
\begin{theorem}\label{Th1.2}
    Assume \,$0<\alpha <1$, $f$ is nonnegative, continuous and monotone non-decreasing on $[0,\infty)$ and $g \in C^{0,\beta}(\partial\Omega)$ (for some $0<\beta<\alpha$) is nonnegative. Then, there exists a nonnegative $\beta-$H\"older solution $u$ to the following obstacle fractional infinity Laplacian problem:
$$\begin{cases} L[u]=f(u) &\qquad in \,\,\,\,\,\{u>0\},\\ u=g &\qquad on\,\,\,\,\, \partial \Omega.\end{cases}$$
{{Moreover, $u$ is locally $\alpha-$H\"older continuous on $\Omega$.}}
\end{theorem}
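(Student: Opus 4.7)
The proof I would carry out follows the approximation strategy emphasized in the abstract. The first task is to construct a one-parameter family of continuous monotone functions $f_\varepsilon:\mathbb{R}\to\mathbb{R}$ extending $f$, designed so that negativity of $u$ is penalized. In the case where $f$ is nondecreasing, take
$$f_\varepsilon(s)=f(s^+)+\frac{\min(s,0)}{\varepsilon},$$
which is continuous, nondecreasing, agrees with $f$ on $[0,\infty)$, and satisfies $f_\varepsilon(s)\to-\infty$ pointwise for $s<0$ as $\varepsilon\to 0$; a symmetric penalty handles the nonincreasing case while preserving monotonicity. Since $f_\varepsilon$ is continuous and monotone on $\mathbb{R}$ and $g\in C^{0,\beta}(\partial\Omega)$, Theorem~\ref{thm:Main 1} yields a solution $u_\varepsilon\in C^{0,\beta}(\overline{\Omega})$ of $L[u_\varepsilon]=f_\varepsilon(u_\varepsilon)$ in $\Omega$ with $u_\varepsilon=g$ on $\partial\Omega$.

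Next, I would use the comparison principle (Proposition~\ref{comparison principle}) against appropriate constant sub- and super-solutions to obtain a uniform bound $\|u_\varepsilon\|_\infty\leq M$ independent of $\varepsilon$. For the lower bound, observe that at any interior minimum $x_0$ of $u_\varepsilon$ one has $L[u_\varepsilon](x_0)\geq 0$ in the viscosity sense, hence $f_\varepsilon(u_\varepsilon(x_0))\geq 0$; by the construction of $f_\varepsilon$ this forces $u_\varepsilon(x_0)\geq -\varepsilon f(0)$, and since $g\geq 0$ on $\partial\Omega$, one concludes $u_\varepsilon\geq -C\varepsilon$ on $\overline{\Omega}$. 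Combined with the upper bound, this yields uniform control $\|f_\varepsilon(u_\varepsilon)\|_\infty\leq C$, which fed back into the H\"older estimate of Theorem~\ref{thm:Main 1} produces a uniform bound $[u_\varepsilon]_{C^{0,\beta}(\overline{\Omega})}\leq C$.

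By Arzel\`a--Ascoli, $u_\varepsilon\to u$ uniformly along a subsequence, with $u\in C^{0,\beta}(\overline{\Omega})$, $u\geq 0$ in $\Omega$, and $u=g$ on $\partial\Omega$. To identify the obstacle equation, fix $x_0\in\{u>0\}$ and a neighborhood $U\Subset\{u>0\}$; uniform convergence ensures $u_\varepsilon>0$ on $U$ for sufficiently small $\varepsilon$, so $f_\varepsilon(u_\varepsilon)=f(u_\varepsilon)\to f(u)$ uniformly on $U$. The stability of viscosity sub/super-solutions from Section~\ref{sec:viscosity} then yields $L[u]=f(u)$ on $U$, and covering $\{u>0\}$ by such neighborhoods completes the verification.

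The main obstacle I expect is the uniform H\"older estimate: the H\"older constant from Theorem~\ref{thm:Main 1} must depend only on the size of the right-hand side along the solution and not on $\|f_\varepsilon\|_\infty$ (which blows up as $\varepsilon\to 0$), and verifying this distinction requires a careful reading of the proof of Theorem~\ref{thm:Main 1}. This is precisely the reason one cannot just apply that theorem to a bounded extension of $f$ and conclude directly; the penalty structure together with the interior-minimum argument is what makes the uniform H\"older control robust. A secondary technicality is handling the nonincreasing case of $f$ with a symmetric penalty that still produces a continuous, monotone $f_\varepsilon$ on all of $\mathbb{R}$.
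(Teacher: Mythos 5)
Your proposal follows the same skeleton as the paper's proof: replace $f$ by a continuous, non-decreasing $f_\varepsilon$ defined on all of $\mathbb{R}$ and agreeing with $f$ where it matters, solve $L[u_\varepsilon]=f_\varepsilon(u_\varepsilon)$ with datum $g$ via Theorem~\ref{thm:Main 1}, obtain $\varepsilon$-uniform $C^{0,\beta}$ bounds, extract a uniform limit $u\geq 0$, and pass to the limit in the viscosity sense at points of $\{u>0\}$, where $f_\varepsilon$ coincides with $f$. The genuine difference is the choice of approximation. The paper takes $f_\varepsilon=0$ on $\mathbb{R}^-$ and $f_\varepsilon=f$ on $[\varepsilon,\infty)$ (treating $f(0)=0$ separately by extending $f$ by zero); this truncation is precisely what makes the quoted results apply verbatim: by Remark~\ref{Remark 2} the Perron solutions are themselves nonnegative, and the global H\"older bound of Proposition~\ref{beta Holder regularity} becomes uniform because all the negative-part terms $[f_\varepsilon(\cdot)]_-$ vanish. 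Your penalization $f_\varepsilon(s)=f(s^+)+\min(s,0)/\varepsilon$ instead produces solutions that are only approximately nonnegative and, as stated, the constant in Proposition~\ref{beta Holder regularity} involves $[f_\varepsilon(-\|u_\varepsilon\|_\infty)]_-\sim\|u_\varepsilon\|_\infty/\varepsilon$, which blows up; so you cannot cite the estimate, you must rerun its barrier proof noting that the barriers only evaluate $f_\varepsilon$ at values $\geq\min_{\overline\Omega}u_\varepsilon\geq-\varepsilon f(0)$, where $f_\varepsilon\geq 0$. You flag exactly this and your interior-minimum argument supplies the needed lower bound, so the route is workable, but it is strictly more work than the paper's cutoff, which buys you nothing in return.

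Two smaller caveats. First, the tool you invoke for the limit step is not quite the right one: Proposition~\ref{stability} concerns the supremum of a family of subsolutions, not stability under uniform convergence with varying right-hand sides. The paper proves the limit passage by hand: given a test function $\varphi$ touching $u$ at $x_0\in\{u>0\}$, it builds perturbed test functions $\varphi_\varepsilon$ touching $u_\varepsilon$ at points $x_\varepsilon\to x_0$, uses the explicit nonlocal estimate $|L[\varphi_\varepsilon](x_\varepsilon)-L[\varphi](x_\varepsilon)|\leq C\|\varphi_\varepsilon-\varphi\|_\infty^{1-\alpha}$ together with Proposition~\ref{lem:continuity}, and exploits $\varphi_\varepsilon(x_\varepsilon)\to u(x_0)>0$ to replace $f_\varepsilon$ by $f$; your plan needs this argument (or an equivalent one) spelled out, since $L$ is nonlocal and a purely local restriction to a neighborhood $U\Subset\{u>0\}$ is not by itself meaningful. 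Second, your ``symmetric penalty'' for non-increasing $f$ would not yield the sign information you use (the penalized $f_\varepsilon$ is then nonnegative everywhere, so the interior-minimum argument gives nothing); note, however, that the paper's own proof also only treats the non-decreasing case, so this does not put you at a disadvantage relative to the paper.
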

The main idea of the proof of Theorem \ref{Th1.2} is to approximate the function $f$ with a sequence of non-decreasing continuous functions and use the result of Section \ref{sec:viscosity} to obtain a sequence of solutions to \eqref{Frac Inf Lap Eq} converging to a solution for the obstacle problem \ref{Frac Inf Lap Eq Obstacle} with boundary data $g$.

\section{Preliminaries}\label{sec: Preliminary}
 In this section, we introduce some properties of the fractional infinity Laplacian operator $L$ that we will use later in our paper. First of all, we
define the following intermediary operators
$$L^+[u]=\sup_{y\in \overline{\Omega},\,y\neq x}  \dfrac{u(y)-u(x)}{|y-x|^{\alpha}}\qquad\qquad \mbox{and}\qquad\qquad L^-[u]=\inf_{y\in \overline{ \Omega},\,y\neq x} \dfrac{u(y)-u(x)}{|y-x|^{\alpha}}.$$\\
Recalling the definition of the operator $L$, we clearly have $L[u]=L^+[u]+L^-[u]$.

We start by the following simple lemma that we use frequently in the sequel (we give the proof just for the sake of completeness).
\begin{lemma}\label{metric} Fix $\alpha\in (0,1]$. Then, for all $x,\,y\in \mathbb R^n$, we have the $\alpha-$triangle inequality: 
$$|x+y|^{\alpha}\leq |x|^{\alpha}+|y|^{\alpha}.$$ 
In addition, the equality holds if and only if we either have $x=0$ or $y=0$.
\end{lemma}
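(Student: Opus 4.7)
The plan is to reduce the vectorial inequality to the scalar subadditivity of the power function $t \mapsto t^\alpha$ on $[0,\infty)$. First I would invoke the ordinary triangle inequality $|x+y| \leq |x|+|y|$ and use the monotonicity of $s \mapsto s^\alpha$ on $[0,\infty)$ to conclude $|x+y|^\alpha \leq (|x|+|y|)^\alpha$. Thus the proof collapses to showing $(a+b)^\alpha \leq a^\alpha + b^\alpha$ for every $a,b \geq 0$, with equality only when $a=0$ or $b=0$.

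For the scalar claim, I would dispose of the trivial case $a+b=0$ and otherwise normalize by setting $t := a/(a+b) \in [0,1]$, so that the inequality becomes $1 \leq t^\alpha + (1-t)^\alpha$. Since $0 \leq t \leq 1$ and $0<\alpha<1$, both $t^\alpha \geq t$ and $(1-t)^\alpha \geq 1-t$ hold (raising a number in $[0,1]$ to a power less than one only increases it). Summing yields $t^\alpha + (1-t)^\alpha \geq t + (1-t) = 1$, and each of the two inequalities is strict unless its base is $0$ or $1$; hence equality forces $t \in \{0,1\}$, that is $a=0$ or $b=0$.

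To finish I would trace the equality condition back through the reduction: if $x=0$ or $y=0$ then $|x+y|^\alpha = |x|^\alpha + |y|^\alpha$ is immediate, while conversely equality in the original inequality implies equality in $(|x|+|y|)^\alpha \leq |x|^\alpha + |y|^\alpha$, which by the scalar case already forces $|x|=0$ or $|y|=0$. The only real content sits in the scalar step, and the single point to watch is the strict inequality: it relies on $\alpha<1$ and must be extracted carefully so that the equality characterization goes through; apart from this, the argument is elementary and poses no genuine obstacle.
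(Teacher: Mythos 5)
Your proof is correct, and its overall skeleton matches the paper's: both reduce the vector statement to the scalar subadditivity $(a+b)^\alpha\leq a^\alpha+b^\alpha$ via the classical triangle inequality and the monotonicity of $s\mapsto s^\alpha$, and both track strictness through this reduction. Where you differ is in how the scalar step is handled. The paper fixes $b>0$ and differentiates $h(r)=(r+b)^\alpha-r^\alpha-b^\alpha$, observing $h'(r)=\alpha\left[(r+b)^{\alpha-1}-r^{\alpha-1}\right]<0$, so $h(a)<h(0)=0$ for $a,b>0$; the strict inequality (hence the equality characterization) comes out of the strict monotonicity of $h$. You instead normalize by $a+b$ and use the pointwise bound $t^\alpha\geq t$ for $t\in[0,1]$, strict unless $t\in\{0,1\}$, then sum. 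Your route is derivative-free and slightly more self-contained, while the paper's calculus argument is a one-line computation that delivers strictness just as directly; the two are of essentially equal length and depth. One small point you handle correctly but should keep explicit in a written version: equality in $|x+y|^\alpha=|x|^\alpha+|y|^\alpha$ forces equality in the second link $(|x|+|y|)^\alpha\leq|x|^\alpha+|y|^\alpha$ (the two bounds sandwich it), and it is this link, not the first, to which the scalar equality case applies.
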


\begin{proof}
Let $a,\,b>0$. For any $r\geq 0$, we define the function  $h(r)=(r+b)^\alpha-r^\alpha-b^\alpha.$ 
Notice that 
    $$h'(r)=\alpha\left[(r+b)^{\alpha-1}-r^{\alpha-1}\right]<0.$$
Hence, we infer that $h$ is strictly decreasing on $[0,\infty)$ and so, one has the following inequality:
\begin{equation} \label{ineQuality}
h(a)=(a+b)^{\alpha}-a^\alpha-b^{\alpha}<h(0)=0.
\end{equation}
For $x,\,y\in \mathbb R^n$ non zero, we get from \eqref{ineQuality} with $a=|x|$, $b=|y|$ and using the classical triangle inequality, that
$$|x+y|^{\alpha}\leq (|x|+|y|)^{\alpha}<|x|^{\alpha}+|y|^{\alpha}.$$
Finally, we note that equality follows immediately when $x=0$ or $y=0$. $\qedhere$

\end{proof}

Fix $x_0\in \overline{\Omega}$. Then, we define the barrier function $\psi_{\beta,x_0}(x)=|x-x_0|^\beta$. First, we calculate $L[\psi_{\beta,x_0}]$ when $0<\beta<\alpha$. We note that $\psi_{\beta,x_0}$ will be used later in Section \ref{sec:Perron} to construct sub/supersolutions as well as to show $\beta-$H\"older regularity on solutions.

\begin{proposition}\label{prop:alpha>beta}
Assume $0<\beta<\alpha\leq 1$, $x_0\in \overline{\Omega}$ and $\psi_{\beta,x_0}(x)=|x-x_0|^{\beta}$. Then, for every $x\in \Omega\setminus\{x_0\}$, we have
\begin{equation}\label{equ:estimate beta<alpha}
L [\psi_{\beta,x_0}](x)\leq |x-x_0|^{\beta-\alpha}\left(\dfrac{r_{\star}^{\beta}-1}{(r_{\star}-1)^\alpha}-1\right)<0,\end{equation}
where $r_{\ast}>\dfrac{1-\beta}{\alpha-\beta}$ is the unique solution in $(1,\infty)$ to the following equation:
$$(\alpha-\beta)\,r^{\beta}+\beta\, r^{\beta-1}-\alpha=0.$$
\end{proposition}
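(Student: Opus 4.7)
The plan is to split $L=L^++L^-$ and bound each summand separately. For the infimum, I would simply plug in the admissible point $y=x_0\in\overline{\Omega}$, which yields immediately
\[
L^-[\psi_{\beta,x_0}](x)\;\leq\;\frac{0-|x-x_0|^{\beta}}{|x-x_0|^{\alpha}}\;=\;-|x-x_0|^{\beta-\alpha}.
\]
For the supremum, only points $y$ with $|y-x_0|>|x-x_0|$ contribute positively, so I would change variables by setting $r=|y-x_0|/|x-x_0|>1$ and $t=|y-x|/|x-x_0|$. The ordinary triangle inequality gives $t\geq r-1$, and hence
\[
\frac{|y-x_0|^{\beta}-|x-x_0|^{\beta}}{|y-x|^{\alpha}}\;=\;|x-x_0|^{\beta-\alpha}\,\frac{r^{\beta}-1}{t^{\alpha}}\;\leq\;|x-x_0|^{\beta-\alpha}\,\frac{r^{\beta}-1}{(r-1)^{\alpha}}.
\]
Taking sup over $y$ reduces the problem to maximizing $h(r)=(r^{\beta}-1)/(r-1)^{\alpha}$ on $(1,\infty)$.

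Next I would carry out the one-variable optimization. Logarithmic differentiation of $h$ and clearing denominators shows that interior critical points solve exactly
\[
(\alpha-\beta)\,r^{\beta}+\beta\, r^{\beta-1}-\alpha=0,
\]
so I would call this function $\phi(r)$ and analyze it. Note that $\phi(1)=0$. The derivative factors as $\phi'(r)=\beta r^{\beta-2}\bigl[(\alpha-\beta)r-(1-\beta)\bigr]$, which vanishes only at $r_{0}=(1-\beta)/(\alpha-\beta)\geq 1$ (strictly greater than $1$ when $\alpha<1$), is negative on $(1,r_{0})$ and positive on $(r_{0},\infty)$. Hence $\phi$ dips strictly below zero after $r=1$ and, because $\phi(r)\to+\infty$ as $r\to\infty$, must cross zero exactly once in $(r_{0},\infty)$. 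That crossing is the required $r_{\star}$, and automatically $r_{\star}>(1-\beta)/(\alpha-\beta)$. Since $h(1^{+})=0$ and $h(r)\sim r^{\beta-\alpha}\to 0$ as $r\to\infty$ (using $\beta<\alpha$), the critical point $r_{\star}$ is in fact a global maximum, so
\[
L^{+}[\psi_{\beta,x_0}](x)\;\leq\;|x-x_0|^{\beta-\alpha}\,\frac{r_{\star}^{\beta}-1}{(r_{\star}-1)^{\alpha}}.
\]

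Adding the two estimates gives the claimed upper bound for $L[\psi_{\beta,x_0}](x)$. To finish, I would verify the strict negativity of the constant in parentheses, i.e.\ $r_{\star}^{\beta}-1<(r_{\star}-1)^{\alpha}$. Here Lemma~\ref{metric} is perfectly suited: applying it with $x=1$ and $y=r_{\star}-1$ (both nonzero since $r_{\star}>1$) yields the strict $\alpha$-subadditivity $r_{\star}^{\alpha}<1+(r_{\star}-1)^{\alpha}$, i.e.\ $r_{\star}^{\alpha}-1<(r_{\star}-1)^{\alpha}$. Since $r_{\star}>1$ and $\beta<\alpha$ imply $r_{\star}^{\beta}<r_{\star}^{\alpha}$, we get $r_{\star}^{\beta}-1<(r_{\star}-1)^{\alpha}$ as needed.

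The main obstacle is the one-variable optimization: showing that the equation for $r_{\star}$ has a unique root in $(1,\infty)$ with the stated lower bound, and ruling out that the sup of $h$ is attained at the endpoints. All other steps are routine applications of the triangle inequality, Lemma~\ref{metric}, and the explicit choice $y=x_0$ for the infimum.
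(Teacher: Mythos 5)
Your proof is correct and follows essentially the same route as the paper: bound $L^-$ by taking $y=x_0$, reduce $L^+$ via the triangle inequality to maximizing $\Psi(r)=(r^\beta-1)/(r-1)^\alpha$ over $r>1$, locate the unique interior maximizer $r_\star>\frac{1-\beta}{\alpha-\beta}$ through the same critical-point equation, and invoke Lemma \ref{metric} for the strict negativity. The only deviation is the final step, where your chain $r_\star^\beta\le r_\star^\alpha<1+(r_\star-1)^\alpha$ is actually cleaner than the paper's $r_\star^\beta<(r_\star-1)^\beta+1\le(r_\star-1)^\alpha+1$, whose middle inequality would require $r_\star\ge 2$.
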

\begin{proof}
First, it is clear that
\begin{equation} \label{est on L-}
L^-[\psi_{\beta,x_0}](x)\leq \dfrac{\psi_{\beta,x_0}(x_0)-\psi_{\beta,x_0}(x)}{|x_0-x|^{\alpha}}=-|x-x_0|^{\beta-\alpha}.
\end{equation}
On the other hand,
 \begin{align*} L^+[\psi_{\beta,x_0}](x) 
  &=\sup_{y\in \overline{\Omega},\, y\neq x}\dfrac{|y-x_0|^{\beta}-|x-x_0|^{\beta}}{|y-x|^{\alpha}}=\sup_{y\in \overline{\Omega},\,|y-x_0|> |x-x_0|}\dfrac{|y-x_0|^{\beta}-|x-x_0|^{\beta}}{|y-x|^{\alpha}}\\
  &\leq \sup_{y\in \overline{\Omega},\, |y-x_0|> |x-x_0|}\dfrac{|y-x_0|^{\beta}-|x-x_0|^{\beta}}{(|y-x_0|-|x-x_0|)^{\alpha}}= |x-x_0|^{\beta-\alpha}\sup_{y\in \overline{\Omega},\, |y-x_0|> |x-x_0|}\dfrac{\left(\dfrac{|y-x_0|}{|x-x_0|}\right)^{\beta}-1}{\left(\dfrac{|y-x_0|}{|x-x_0|}-1\right)^{\alpha}}.
  \end{align*}
  Hence
\begin{equation} \label{est on L+}
  L^+[\psi_{\beta,x_0}](x)\leq |x-x_0|^{\beta-\alpha}\sup_{1<r<\frac{\diam(\Omega)}{|x-x_0|}} \Psi(r),
  \end{equation}
where 
$\Psi(r):=\dfrac{r^{\beta}-1}{(r-1)^{\alpha}}$. We note that $\lim_{r\to 1^+} \Psi(r)=\begin{cases} 0 \quad & \,\,\mbox{if}\,\,\,\alpha<1\\ \beta & \,\,\mbox{if}\,\,\, \alpha=1\end{cases}$ and $\lim_{r\to \infty}\Psi(r)=0$. Moreover, one has
$$\Psi'(r)=\dfrac{\beta r^{\beta-1}(r-1)^{\alpha}-\alpha(r-1)^{\alpha-1}(r^{\beta}-1)}{(r-1)^{2\alpha}}=\dfrac{\beta r^{\beta-1}(r-1)-\alpha (r^{\beta}-1)}{(r-1)^{\alpha+1}}=\dfrac{(\beta-\alpha)r^{\beta}-\beta r^{\beta-1}+\alpha}{(r-1)^{\alpha+1}}.$$
Now, set \,$p(r)=(\beta-\alpha)r^{\beta}-\beta r^{\beta-1}+\alpha.$ 
Notice that $p(1)=0$,\, $\lim_{r\to \infty} p(r)=-\infty$, and we have
$$p'(r)=\beta(\beta-\alpha)r^{\beta-1}-\beta(\beta-1)r^{\beta-2}=\beta r^{\beta-2}[(\beta-\alpha)r-(\beta-1)].$$
Let $r_0=\dfrac{1-\beta}{\alpha-\beta}$ be the unique root of $p^\prime(r)=0$. From above we deduce that $p$ has a unique root $r_\star >r_0$ such that $$\sup_{r>1}\Psi(r)=\Psi(r_{\star}).$$
Combining the estimates \eqref{est on L-} on $L^-$ and \eqref{est on L+} on $L^+$ , we conclude \eqref{equ:estimate beta<alpha}. But, from Lemma \ref{metric}, we have
$$r_\star^\beta< (r_{\star}-1)^{\beta}+1\leq (r_{\star}-1)^{\alpha}+1.$$
Hence, 
we have \,$L[\psi_{\beta,x_0}](x)<0$. $\qedhere$
\end{proof}

We now give an estimate on $L [\psi_{\beta,x_0}]$ but in the case when $\beta=\alpha$. This will be used in Section \ref{sec:obstacle} to show $\alpha-$H\"older regularity on solutions to the obstacle problem \eqref{Frac Inf Lap Eq Obstacle}.
\begin{proposition}\label{alpha=beta}
Letting $\psi_{\alpha,x_0}(x)=|x-x_0|^{\alpha}$ with $\alpha\in (0,1)$ and $x_0\in \overline{\Omega}$. Then, one has
$$L[\psi_{\alpha.x_0}](x) \leq -1+\dfrac{ \left(\dfrac{\diam(\Omega)}{|x-x_0|}\right)^\alpha-1}{\left(\dfrac{\diam (\Omega)}{|x-x_0|}-1\right)^{\alpha}}<0,\qquad \mbox{for all}\,\,\,x \neq x_0.$$
\end{proposition}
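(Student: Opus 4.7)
The proof will follow the same template as Proposition \ref{prop:alpha>beta}, splitting $L=L^++L^-$ and bounding each piece separately, but with a key difference in how the function $\Psi$ behaves at the critical exponent $\beta=\alpha$.

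First, I would evaluate $L^-[\psi_{\alpha,x_0}](x)$ by simply choosing $y=x_0$ in the infimum defining $L^-$. This immediately gives
\[
L^-[\psi_{\alpha,x_0}](x)\leq \frac{0-|x-x_0|^{\alpha}}{|x-x_0|^{\alpha}}=-1.
\]
For $L^+[\psi_{\alpha,x_0}](x)$, I would restrict the supremum to $y\in\overline{\Omega}$ with $|y-x_0|>|x-x_0|$ (otherwise the numerator is nonpositive and contributes nothing useful) and apply the triangle inequality $|y-x|\geq|y-x_0|-|x-x_0|$ in the denominator. Substituting $r=|y-x_0|/|x-x_0|$ and factoring out $|x-x_0|^{\alpha}$ from numerator and denominator, exactly as in the previous proposition, one lands on
\[
L^+[\psi_{\alpha,x_0}](x)\leq \sup_{1<r\leq \diam(\Omega)/|x-x_0|}\Psi(r),\qquad \Psi(r)=\frac{r^{\alpha}-1}{(r-1)^{\alpha}}.
\]

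The second step is to identify where $\Psi$ achieves its supremum on the interval. Specializing the computation of $\Psi'$ from the previous proof to $\beta=\alpha$, the polynomial $p(r)$ collapses to $p(r)=\alpha\bigl(1-r^{\alpha-1}\bigr)$, and since $\alpha<1$ and $r>1$ we have $r^{\alpha-1}<1$, hence $p(r)>0$. This is the real point of the proof: unlike the case $\beta<\alpha$, where $\Psi$ attains an interior maximum $r_\star$, here $\Psi$ is strictly increasing on $(1,\infty)$. Consequently, the supremum on $(1,\diam(\Omega)/|x-x_0|]$ is assumed at the right endpoint, giving
\[
L^+[\psi_{\alpha,x_0}](x)\leq \frac{\bigl(\diam(\Omega)/|x-x_0|\bigr)^{\alpha}-1}{\bigl(\diam(\Omega)/|x-x_0|-1\bigr)^{\alpha}}.
\]
Adding this to the $L^-$ bound yields the claimed inequality.

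Finally, for the strict negativity, I would invoke Lemma \ref{metric} with $a=\diam(\Omega)/|x-x_0|-1>0$ and $b=1$, noting that since $x\neq x_0$ and the diameter is attained only at antipodal points, we may assume $|x-x_0|<\diam(\Omega)$ (the degenerate case $|x-x_0|=\diam(\Omega)$ forces $L^+\leq 0$ directly, so $L\leq -1<0$). The strict $\alpha$-triangle inequality gives $(a+1)^{\alpha}<a^{\alpha}+1$, i.e.\ $\bigl(\diam(\Omega)/|x-x_0|\bigr)^{\alpha}-1<\bigl(\diam(\Omega)/|x-x_0|-1\bigr)^{\alpha}$, so the fraction in the bound is strictly less than $1$, and the whole expression is strictly negative. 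The main conceptual obstacle is noticing that the monotonicity of $\Psi$ flips when $\beta=\alpha$, so that the sup is now a boundary maximum rather than an interior one; once that is seen, the rest is a routine adaptation of the previous argument.
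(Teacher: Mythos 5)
Your proof is correct and follows essentially the same route as the paper: the same $L^++L^-$ decomposition, the bound $L^-\le -1$ obtained by taking $y=x_0$, and the key observation that $\Psi(r)=\frac{r^{\alpha}-1}{(r-1)^{\alpha}}$ becomes increasing when $\beta=\alpha$, so the supremum is attained at the right endpoint $\diam(\Omega)/|x-x_0|$. The only cosmetic difference is the final strict inequality: the paper deduces $\Psi\bigl(\diam(\Omega)/|x-x_0|\bigr)<1$ from the monotonicity of $\Psi$ together with $\lim_{r\to\infty}\Psi(r)=1$, whereas you invoke Lemma \ref{metric} directly; both arguments are equivalent and correct.
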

\begin{proof}

From Lemma \ref{metric}, one has $|x-x_0|^{\alpha}\leq |x-y|^{\alpha}+|y-x_0|^{\alpha}$ and so for $y\neq x$, we have the following:
$$\dfrac{|y-x_0|^{\alpha}-|x-x_0|^{\alpha}}{|y-x|^{\alpha}}\geq -1,$$
with equality attained at $y=x_0$. So,
$L^-[\psi_{\alpha,x_0}](x)=-1$. Proceeding as in Proposition \ref{prop:alpha>beta}, one has 
    $$L^+[\psi_{\alpha,x_0}](x)\leq \sup_{1<r<\frac{\diam(\Omega)}{|x-x_0|}}\Psi(r),$$
    with $\Psi(r)=\dfrac{r^{\alpha}-1}{(r-1)^{\alpha}}.$ In this case,
    $\Psi'(r)=\dfrac{\alpha(1-r^{\alpha-1})}{(r-1)^{\alpha+1}}>  0.$
    Consequently, we get that
    $$L^+[\psi_{\alpha,x_0}](x)\leq \Psi\left(\dfrac{\diam(\Omega)}{|x-x_0|}\right)=\dfrac{ \left(\dfrac{\diam (\Omega)}{|x-x_0|}\right)^\alpha-1}{\left(\dfrac{\diam(\Omega)}{|x-x_0|}-1\right)^{\alpha}}<\lim_{r\to \infty}\psi(r)=1.$$
If \,$\alpha<1$, then we have for $x\neq x_0$
$$L[\psi_{\alpha,x_0}](x) \leq -1+\dfrac{ \left(\dfrac{\diam(\Omega)}{|x-x_0|}\right)^\alpha-1}{\left(\dfrac{\diam (\Omega)}{|x-x_0|}-1\right)^{\alpha}}<0. \qedhere$$ 
\end{proof}
  In the following lemma, we will show some estimates on $L^\pm[\varphi]$ in the case when $\varphi$ is a smooth function.

\begin{lemma}\label{sign L+}
Assume $\varphi$ is a $C^1$ function in a neighborhood of some point $x_0 \in \Omega$. Then, for $\alpha\in (0,1]$, we have
$$L^-[\varphi](x_0) \leq 0 \leq L^+[\varphi](x_0).$$
Moreover, if $\alpha=1$ then
$$ L^+[\varphi](x_0)\geq |\nabla \varphi(x_0)| \qquad \mbox{and}\qquad  L^-[\varphi](x_0)\leq -|\nabla \varphi(x_0)|.$$
\end{lemma}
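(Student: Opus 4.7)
The plan is to probe $x_0$ along straight half-rays $y_t = x_0 + t v$, with $v \in \mathbb{R}^n$ a unit vector and $t > 0$. Since $x_0 \in \Omega$ and $\Omega$ is open, for every such $v$ and all sufficiently small $t>0$ the point $y_t$ lies in $\Omega \subset \overline{\Omega}$ and is therefore an admissible competitor in the definitions of $L^{\pm}[\varphi](x_0)$. The core computation I would carry out is the first-order Taylor expansion of $\varphi$ at $x_0$: by the $C^1$-regularity assumption,
$$\frac{\varphi(y_t) - \varphi(x_0)}{|y_t - x_0|^{\alpha}} = \frac{t\,\nabla\varphi(x_0)\cdot v + o(t)}{t^{\alpha}} = t^{1-\alpha}\bigl(\nabla\varphi(x_0)\cdot v + o(1)\bigr) \quad \text{as } t \to 0^+.$$

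From this identity the two regimes split cleanly. In the subcritical case $0 < \alpha < 1$, the prefactor $t^{1-\alpha}$ drives the right-hand side to $0$ along every direction $v$. So for each $\varepsilon > 0$ I can exhibit a competitor $y_t$ for which the quotient lies in $(-\varepsilon, \varepsilon)$; taking the supremum gives $L^+[\varphi](x_0) \geq -\varepsilon$ and taking the infimum gives $L^-[\varphi](x_0) \leq \varepsilon$. Letting $\varepsilon \to 0$ yields the desired bounds $L^-[\varphi](x_0) \leq 0 \leq L^+[\varphi](x_0)$.

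In the endpoint case $\alpha = 1$, the prefactor becomes $1$ and the quotient converges to the directional derivative $\nabla\varphi(x_0) \cdot v$. I would then pick $v = \pm \nabla\varphi(x_0)/|\nabla\varphi(x_0)|$ when $\nabla\varphi(x_0) \neq 0$; the case $\nabla\varphi(x_0) = 0$ is already covered by the argument of the previous paragraph, whose limit along every ray is $0$. The $+$ sign gives $L^+[\varphi](x_0) \geq |\nabla\varphi(x_0)|$ and the $-$ sign gives $L^-[\varphi](x_0) \leq -|\nabla\varphi(x_0)|$, which both refine the weak bounds at $\alpha = 1$. I do not anticipate any real obstacle: the whole proof is a one-line Taylor expansion combined with the scaling $|y_t - x_0|^{\alpha} = t^{\alpha}$, and the only mild bookkeeping—checking that $y_t \in \overline{\Omega}$—is immediate because $x_0$ is an interior point.
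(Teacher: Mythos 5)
Your proposal is correct and follows essentially the same route as the paper: both estimate $L^{\pm}[\varphi](x_0)$ by testing with points on a ray $x_0+tv$ and using the first-order (Taylor/differentiability) behavior of $\varphi$ together with the scaling $t^{1-\alpha}$, choosing $v=\pm\nabla\varphi(x_0)/|\nabla\varphi(x_0)|$ when $\alpha=1$. The only cosmetic difference is that the paper obtains the $L^-$ bounds from the identity $L^-[\varphi]=-L^+[-\varphi]$ while you argue directly with the opposite direction, which is equivalent.
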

\begin{proof}
Let \,${\bf e}$\, be a unit vector in $\mathbb R^n$. From the definition of $L^+$, one has the following:
$$L^+[\varphi](x_0)\geq \lim_{h\to 0} \dfrac{\varphi(x_0+h {\bf e})-\varphi(x_0)}{|h|^{\alpha}}=\lim_{h\to 0} \dfrac{\varphi(x_0+h {\bf e})-\varphi(x_0)}{|h|}|h|^{1-\alpha}
$$
$$=\begin{cases} 0  & \,\,\,\mbox{if}\,\,\,\,\, 0<\alpha<1,\\
\nabla\varphi(x_0)\cdot {\bf e} &\,\,\, \mbox{if}\,\,\,\,\, \alpha =1.\end{cases}
$$
For $\alpha=1$, taking ${\bf e}=\dfrac{\nabla \varphi(x_0)}{|\nabla \varphi(x_0)|}$ when $\nabla \varphi(x_0)\neq 0$, we deduce in this case that 
$L^+[\varphi](x_0)\geq |\nabla \varphi(x_0)|$. 

The estimates on $L^-[\varphi]$ follow directly from the fact that $L^-[\varphi]=-L^+[-\varphi]$. $\qedhere$
\end{proof}

Next, we show that $L^\pm[\varphi]$ must be continuous for smooth functions $\varphi$.

\begin{proposition}\label{lem:continuity}
If $\varphi\in C^1(\Omega)$, then $L^{\pm}[\varphi]\in C(\Omega)$.
\end{proposition}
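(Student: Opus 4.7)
The plan is to realise $L^+[\varphi](x)$ as the supremum of a jointly continuous function on the compact set $\overline{\Omega}\times\overline{\Omega}$, and then invoke the elementary fact that such a parametrised supremum is continuous.

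First I would introduce the difference quotient
$$F(x,y)=\dfrac{\varphi(y)-\varphi(x)}{|y-x|^{\alpha}}\quad\text{for }y\neq x,$$
and extend it to the diagonal by setting $F(x,x)=0$. The key step is to verify that this extension is continuous on $\overline{\Omega}\times\overline{\Omega}$. Away from the diagonal this is obvious from the continuity of $\varphi$ and of $(x,y)\mapsto |y-x|^{\alpha}$. On the diagonal, fix $x_0\in\Omega$ and take a sequence $(x_n,y_n)\to(x_0,x_0)$. Choose a compact neighbourhood $K\subset\Omega$ of $x_0$ on which $\nabla\varphi$ is bounded by some constant $M$ (possible since $\varphi\in C^1(\Omega)$). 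The mean value inequality yields $|\varphi(y_n)-\varphi(x_n)|\leq M\,|y_n-x_n|$ for $n$ large, hence
$$|F(x_n,y_n)|\leq M\,|y_n-x_n|^{1-\alpha}\longrightarrow 0,$$
using crucially that $\alpha<1$. Thus $F(x_n,y_n)\to 0=F(x_0,x_0)$.

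Next, I would observe that Lemma \ref{sign L+} gives $L^+[\varphi](x)\geq 0 = F(x,x)$, so including the diagonal point in the supremum does not change its value:
$$L^+[\varphi](x)=\sup_{y\in\overline{\Omega}}F(x,y).$$
Now I apply the standard compactness fact: if $F$ is continuous on a product $U\times\overline{\Omega}$ with $\overline{\Omega}$ compact, then $x\mapsto \sup_{y\in\overline{\Omega}}F(x,y)$ is continuous on $U$. The short argument uses uniform continuity of $F$ on compact subsets of $\Omega\times\overline{\Omega}$: given $x_0\in\Omega$ and $\varepsilon>0$, pick $\delta$ so that $|F(x,y)-F(x_0,y)|<\varepsilon$ whenever $|x-x_0|<\delta$, uniformly in $y$; this gives $|L^+[\varphi](x)-L^+[\varphi](x_0)|\leq\varepsilon$. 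Hence $L^+[\varphi]\in C(\Omega)$.

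Finally, since $-\varphi$ is also in $C^1(\Omega)$, the same argument applied to $-\varphi$, together with the identity $L^-[\varphi]=-L^+[-\varphi]$, yields $L^-[\varphi]\in C(\Omega)$ as well. The only genuinely subtle point in the whole proof is the continuous extension of $F$ across the diagonal, which depends essentially on the strict inequality $\alpha<1$; the remainder is a routine compactness argument.
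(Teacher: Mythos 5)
Your argument is correct for $\alpha\in(0,1)$, but it takes a genuinely different route from the paper. You realise $L^{+}[\varphi](x)=\sup_{y\in\overline{\Omega}}F(x,y)$ for the difference quotient $F$ extended by $0$ on the diagonal, check joint continuity of $F$ (the diagonal being handled by the local Lipschitz bound and $|y-x|^{1-\alpha}\to 0$), use Lemma~\ref{sign L+} to see that adding the diagonal does not lower the supremum, and conclude by the standard fact that a supremum over a compact parameter set of a jointly continuous function is continuous. The paper instead argues by sequences $x_n\to x_0$, splitting according to whether near-maximisers $y_n$ stay away from $x_0$ or accumulate at $x_0$, and uses Lemma~\ref{sign L+} together with the mean value theorem in the second case. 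For $\alpha<1$ your argument is cleaner and more structural; note only that, exactly as in the paper's proof, you implicitly need $\varphi$ to be (at least) continuous and bounded on $\overline{\Omega}$, not merely $C^1(\Omega)$, both for the off-diagonal continuity of $F$ on $\Omega\times\overline{\Omega}$ and for finiteness of the supremum — this matches how the proposition is actually used, since test functions are taken in $C^1(\Omega)\cap C(\overline{\Omega})$.

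The one genuine limitation is the case $\alpha=1$, which the surrounding results of Section~\ref{sec: Preliminary} (Lemma~\ref{sign L+}, Proposition~\ref{prop:alpha>beta}) include and which the paper's own proof of this proposition treats explicitly. When $\alpha=1$ the difference quotient has no continuous extension to the diagonal: along $(x_n,y_n)\to(x_0,x_0)$ one has $F(x_n,y_n)=\nabla\varphi(\xi_n)\cdot\frac{y_n-x_n}{|y_n-x_n|}$, whose limit points fill $[-|\nabla\varphi(x_0)|,\,|\nabla\varphi(x_0)|]$, so your key step fails and the natural extension $F(x,x)=|\nabla\varphi(x)|$ is only upper semicontinuous; the supremum is then a priori only upper semicontinuous, and one must add an argument (as the paper does, via Lemma~\ref{sign L+} giving $L^{+}[\varphi](x_0)\geq|\nabla\varphi(x_0)|$ and the mean value theorem for the upper bound) to recover continuity with limit value $|\nabla\varphi(x_0)|$. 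If you restrict the statement to $\alpha\in(0,1)$ — the range in which the operator $L$ is used in the main theorems — your proof is complete; to claim the proposition in the generality the paper proves it, you must supply the $\alpha=1$ case separately.
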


\begin{proof}

Fix $x_0\in \Omega$ and let $\{x_n\}$ be a sequence of points converging to $x_0$. We show that 
$$L^+[\varphi](x_n) \rightarrow L^+[\varphi](x_0).$$
We have 
$$L^+[\varphi](x_n)=\sup_{y \in \bar{\Omega},\,y\neq x_n} \frac{\varphi(y)-\varphi(x_n)}{|y-x_n|^\alpha}.$$
First, assume that there exists an $\varepsilon_0>0$ such that for all $n$ there is a point $y_n \in \bar{\Omega}\backslash B(x_0,\varepsilon_0)$ satisfying
$$L^+[\varphi](x_n)= \frac{\varphi(y_n)-\varphi(x_n)}{|y_n-x_n|^\alpha} \geq \frac{\varphi(y)-\varphi(x_n)}{|y-x_n|^\alpha},\qquad \mbox{for all}\,\,\,\,y \in \bar{\Omega},\,y \neq x_n.$$
Hence, $\liminf_{n\to \infty} L^+[\varphi](x_n)\geq \dfrac{\varphi(y)-\varphi(x_0)}{|y-x_0|^{\alpha}}$ for every $y \neq x_0$ and so, $\liminf_{n\to \infty} L^+[\varphi](x_n) \geq L^+[\varphi](x_0).$
On the other hand, $y_n$ has a convergent subsequence $y_{n_k}$ say to $y_0$, then since $y_0\neq x_0$,
$$\lim_{k\to \infty} L^+[\varphi](x_{n_k})=\dfrac{\varphi(y_0)-\varphi(x_0)}{|y_0-x_0|^{\alpha}}\geq \dfrac{\varphi(y)-\varphi(x_0)}{|y-x_0|^{\alpha}}\qquad \text{for all $y\in \bar \Omega, y\neq x_0$},$$
 and so 
$\lim_{k\to \infty} L^+[\varphi](x_{n_k})=L^+[\varphi](x_0).$
We conclude that in this case $\lim_{n\to \infty} L^+[\varphi](x_n)=L^+[\varphi](x_0).$

Now, assume that for every $n$ there is a point $y_n \neq x_n$ such that $|y_n-x_0| \rightarrow 0$ when $n \to \infty$
and
\begin{equation}\label{eq:ineq}
\frac{\varphi(y)-\varphi(x_n)}{|y-x_n|^\alpha} -\frac{1}{n} \leq L^+[\varphi](x_n) -\frac{1}{n} \leq \frac{\varphi(y_n)-\varphi(x_n)}{|y_n-x_n|^\alpha} 
\end{equation}
for all $y \in \bar{\Omega}$, $y \neq x_n$. Take $\delta>0$ such that $\overline{ B(x_0,\delta)}\subseteq \Omega$. Since $\varphi\in C^1(\Omega),$
then we clearly have
$$|\varphi(x)-\varphi(x')|\leq M|x-x'| \qquad\forall x,x'\in B(x_0,\delta).$$
If $\alpha<1$, for $n$ large, we get
$$\dfrac{|\varphi(y_n)-\varphi(x_n)|}{|y_n-x_n|^{\alpha}}\leq M|x_n-y_n|^{1-\alpha}\to 0.$$
Hence, \eqref{eq:ineq} becomes
$$\dfrac{\varphi(y)-\varphi(x_0)}{|y-x_0|^{\alpha}}\leq \limsup_{n\to \infty} L^+[\varphi](x_n)\leq 0,\qquad \mbox{for all}\,\, y\neq x_0.$$
Since $y$ is arbitrary, then $L^+[\varphi](x_0)\leq \limsup_{n\to \infty} L^+[\varphi](x_n)\leq 0.$ From Lemma \ref{sign L+}, we infer that
$$\lim_{n\to\infty} L^+[\varphi](x_n)=L^+[\varphi](x_0)=0.$$
Finally, we assume $\alpha=1$.  Notice that \eqref{eq:ineq} and Lemma \ref{sign L+} imply together that
$$|\nabla \varphi(x_0)|\leq L^+[\varphi](x_0)\leq \liminf_{n\to \infty} L^+[\varphi](x_n).$$
From the mean value theorem, there exists a point $\xi_n$ on the line segment joining $x_n$ to $y_n$ such that
$$\dfrac{\varphi(y_n)-\varphi(x_n)}{|y_n-x_n|}=\nabla \varphi(\xi_n)\cdot \dfrac{y_n-x_n}{|y_n-x_n|}\leq |\nabla \varphi(\xi_n)|.$$
Then, again from \eqref{eq:ineq},
$$\limsup_{n\to \infty} L^+[\varphi](x_n)\leq |\nabla \varphi(x_0)|,$$
concluding in this case that
$$\lim_{n\to \infty} L^+[\varphi](x_n)=L^+[\varphi](x_0)=|\nabla \varphi(x_0)|. \qedhere$$




\end{proof}
\begin{remark}
 Notice that the result of Proposition \ref{lem:continuity} fails if $\varphi$ is assumed to be only continuous. In fact, let $x_0\in \Omega$ and consider $\psi_{x_0}(x)=|x-x_0|.$ We have from the proof of Proposition \ref{alpha=beta} that $L^-[\psi_{x_0}](x)=-1$ for $x\neq x_0$ though $L^{-}[\psi_{x_0}](x_0)=1$.
\end{remark}

We complete this section with the following Lemma.
\begin{lemma}\label{lem:delta perturbation}
Assume 
$\varphi\in C^1(\Omega)\cap C
(\overline{\Omega})$ and $x_0\in \overline{\Omega}$. 
Define $\varphi_{\delta}(x)=\varphi(x)+\delta|x-x_0|^2$, with $\delta\in \mathbb R$. Then, 
we have
$$|L[\varphi_{\delta}](x)-L[\varphi](x)|\leq 4|\delta| \diam(\Omega)^{2-\alpha}.$$
In particular, this estimate implies that $L[\varphi_{\delta}]$ converges uniformly in $\delta$ to $L[\varphi]$.
\end{lemma}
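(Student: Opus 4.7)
The plan is to write $\varphi_{\delta} = \varphi + \eta$ where $\eta(x) := \delta|x-x_0|^{2}$, and then show that $L^{\pm}$ are ``nearly additive'' up to the $\alpha$-Hölder semi-norm of $\eta$. Specifically, for any $y \neq x$ one has the trivial decomposition
$$\frac{\varphi_{\delta}(y)-\varphi_{\delta}(x)}{|y-x|^{\alpha}} \;=\; \frac{\varphi(y)-\varphi(x)}{|y-x|^{\alpha}} \;+\; \frac{\eta(y)-\eta(x)}{|y-x|^{\alpha}},$$
so taking the sup over $y \in \overline{\Omega}\setminus\{x\}$ and using $\sup(a+b)\leq \sup a + \sup b$ (respectively $\inf(a+b)\geq \inf a + \inf b$) gives
$$L^{+}[\varphi_{\delta}](x) \;\leq\; L^{+}[\varphi](x) + \sup_{y\neq x}\frac{\eta(y)-\eta(x)}{|y-x|^{\alpha}},\qquad L^{-}[\varphi_{\delta}](x) \;\geq\; L^{-}[\varphi](x) + \inf_{y\neq x}\frac{\eta(y)-\eta(x)}{|y-x|^{\alpha}},$$
together with the analogous inequalities in the reverse direction obtained by swapping $\varphi$ and $\varphi_{\delta}$ (equivalently, by replacing $\delta$ with $-\delta$).

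The main step is therefore to control the $\alpha$-Hölder quotient of $\eta$ uniformly. Factoring
$$\eta(y)-\eta(x) \;=\; \delta\bigl(|y-x_0|^{2}-|x-x_0|^{2}\bigr) \;=\; \delta\bigl(|y-x_0|-|x-x_0|\bigr)\bigl(|y-x_0|+|x-x_0|\bigr),$$
and applying the reverse triangle inequality $\bigl||y-x_0|-|x-x_0|\bigr|\leq |y-x|$ together with $|y-x_0|+|x-x_0|\leq 2\diam(\Omega)$, I get
$$|\eta(y)-\eta(x)| \;\leq\; 2|\delta|\,\diam(\Omega)\,|y-x|.$$
Dividing by $|y-x|^{\alpha}$ and bounding the leftover factor $|y-x|^{1-\alpha}\leq \diam(\Omega)^{1-\alpha}$ (using $\alpha<1$, which is the case of interest; the case $\alpha=1$ is even easier with factor $1$), yields the uniform bound
$$\sup_{y\neq x}\frac{|\eta(y)-\eta(x)|}{|y-x|^{\alpha}} \;\leq\; 2|\delta|\,\diam(\Omega)^{2-\alpha}.$$

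Combining this with the two displayed inequalities gives $|L^{+}[\varphi_{\delta}](x)-L^{+}[\varphi](x)|\leq 2|\delta|\diam(\Omega)^{2-\alpha}$ and similarly for $L^{-}$, and adding the two (recall $L = L^{+}+L^{-}$) produces the claimed estimate with the constant $4$. Since the right-hand side is independent of $x$, the uniform convergence $L[\varphi_{\delta}]\to L[\varphi]$ as $\delta\to 0$ is immediate. There is no real obstacle here: the only nontrivial point is recognizing that all the analysis reduces to a Hölder bound on the quadratic perturbation $\eta$, which is obtained by the elementary factoring above and does not require the $C^{1}$ hypothesis on $\varphi$ (that hypothesis is inherited from the ambient setting rather than used in this lemma).
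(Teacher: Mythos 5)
Your proof is correct and follows essentially the same route as the paper: decompose the difference quotient of $\varphi_{\delta}$ into that of $\varphi$ plus the quotient of the quadratic perturbation, bound the latter uniformly by $2|\delta|\diam(\Omega)^{2-\alpha}$ (the paper factors $|y-x_0|^2-|x-x_0|^2=(y-x)\cdot(y+x-2x_0)$ where you use $(a-b)(a+b)$ with the reverse triangle inequality, an immaterial difference), and then pass the bound through $L^{\pm}$ and add. Your explicit sup/inf bookkeeping and the remark that the $C^1$ hypothesis is not actually needed are fine refinements of the same argument.
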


\begin{proof}

Notice that for $y\neq x$, one has
\begin{align*}
    \dfrac{\varphi_{\delta}(y)-\varphi_{\delta}(x)}{|y-x|^{\alpha}}&=\dfrac{\varphi(y)-\varphi(x)}{|y-x|^{\alpha}}+\delta\,\dfrac{ |y-x_0|^2-|x-x_0|^2}{|y-x|^{\alpha}}\\
    &=\dfrac{\varphi(y)-\varphi(x)}{|y-x|^{\alpha}}+\delta\,\dfrac{(y-x)\cdot (y+x-2x_0)}{|y-x|^{\alpha}}.
\end{align*}
Hence,
$$|L^{\pm} [\varphi_{\delta}](x)-L^{\pm}[\varphi](x)|\leq |\delta| |y-x|^{1-\alpha}\left(|y-x_0|+|x-x_0|\right)\leq 2|\delta| \diam(\Omega)^{2-\alpha}.\qedhere$$
\end{proof}

\section{Existence of viscosity solution}\label{sec:viscosity}
In this section, we show the existence of a viscosity solution to \eqref{Frac Inf Lap Eq} by using the Perron’s method with some conditions on the function $f$.
\subsection{Subsolutions and Supersolutions}\label{subsec:Subsuper} First of all, we start by introducing the notions of viscosity subsolutions, supersolutions and solutions. For the theory of viscosity solutions, we refer the reader to \cite{Lions}.
\begin{definition}
Let $\Omega$ be an open bounded domain, $\alpha\in (0,1]$, 
and $f:\mathbb R\mapsto \mathbb R$. 
We say that 
$u:\overline{\Omega}\mapsto \mathbb R$ is a subsolution 
(resp. supersolution) to the equation $L[u]=f(u)$ and write 
$L[u]\geq f(u)$ (resp. $L[u]\leq f(u)$) if and only if 
$u:\overline{\Omega}\mapsto \mathbb R$ is upper semi-
continuous (resp. lower semi-continuous), and for any test 
function $\varphi\in C^1(\Omega)\cap C(\overline{\Omega})$ 
such that $u\leq \varphi$ (resp. $u\geq \varphi$) with 
equality at some $x_0\in \Omega$ then $-L[\varphi]
(x_0)+f(\varphi(x_0))\leq 0$ (resp. $-L[\varphi]
(x_0)+f(\varphi(x_0))\geq 0$). If the last inequality is 
strict for every such $\varphi$ and $x_0$ we say that $u$ 
is a strict subsolution (resp. supersolution) and write 
$L[u]>f(u)$ (resp. $L[u]<f(u)$).

We say that $u$ is a viscosity solution to $L[u]=f(u)$ if it is a viscosity subsolution and a viscosity supersolution to the same equation.
\end{definition}

\begin{remark}\label{rmk:super}\rm
Notice that since $L[-u]=-L[u]$ so if $u$ is a supersolution to $L[u]=f(u)$ then $-u$ is a subsolution to $L[v]=-f(-v)$; this follows from the fact that
$-u$ is upper semi-continuous and if $\varphi\in C^1(\Omega)\cap C(\overline\Omega)$ is such that $-u\leq \varphi$ with equality at $x_0$ then $u-(-\varphi)$ attains a minimum at $x_0$ and since $u$ is a supersolution to $L[u]=f(u)$, we get
$$-L[-\varphi](x_0)+f(-\varphi(x_0))\geq 0.$$
Yet, this implies that
$-L[\varphi](x_0)-f(-\varphi(x_0))\leq 0.$
\end{remark}
\begin{remark}\rm
    The notion of viscosity solution in this paper is stronger than the one in \cite{Chambole} where a viscosity solution there is not necessarily continuous but the upper semicontinuous envelope is a subsolution and the lower semicontinuous envelope is a supersolution.  
\end{remark}


Let $u$ be a viscosity solution on   $\Omega$. Since $L$ is a non-local operator, then it is not clear whether or not $u$ will be always a solution on a subset of $\Omega$. In the following proposition, we will show that this is true provided we remove only one point.

\begin{proposition} \label{subsolution on domain minus point}
Fix \,$x_0 \in \Omega$. Assume that \,$u$\, is a subsolution of $L[u]=f(u)$ on \,$\Omega$, then \,$u$\, is also a subsolution on \,$\Omega\setminus\{x_0\}$.
\end{proposition}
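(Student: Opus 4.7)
The plan is an approximation argument. The only new subtlety on $\Omega\setminus\{x_0\}$ is that an admissible test function $\varphi\in C^1(\Omega\setminus\{x_0\})\cap C(\overline{\Omega})$ need not be $C^1$ at $x_0$, so the subsolution property assumed on $\Omega$ does not apply to $\varphi$ directly. I would fix such a $\varphi$ touching $u$ from above at some point $x_1\in\Omega\setminus\{x_0\}$ (I rename the contact point to avoid a clash with $x_0$), noting that the global bound $u\leq\varphi$ forces $\varphi(x_0)\geq u(x_0)$.

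I would then smooth $\varphi$ in a shrinking ball around $x_0$: fix $\eta\in C^{\infty}_c(\mathbb{R}^n)$ with $\eta\equiv 1$ on $B(0,1)$ and $\eta\equiv 0$ off $B(0,2)$, and for small $r>0$ set
$$\tilde\varphi_r(y):=\eta\bigl(r^{-1}(y-x_0)\bigr)\,c_r+\bigl(1-\eta(r^{-1}(y-x_0))\bigr)\,\varphi(y),\qquad c_r:=\varphi(x_0)+\delta_r,$$
where $\delta_r>0$ is picked so that $c_r\geq\sup_{B(x_0,2r)}u$. By upper semicontinuity of $u$ at $x_0$ together with $\varphi(x_0)\geq u(x_0)$, such a choice exists with $\delta_r\to 0$ as $r\to 0$. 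Routine checks give $\tilde\varphi_r\in C^1(\Omega)\cap C(\overline\Omega)$ (constant $c_r$ on $B(x_0,r)$, equal to $\varphi$ outside $B(x_0,2r)$, smooth blend on the annulus where $\varphi$ itself is $C^1$), $\tilde\varphi_r(x_1)=\varphi(x_1)=u(x_1)$ whenever $r<|x_1-x_0|/2$, and $\|\tilde\varphi_r-\varphi\|_{L^\infty(\overline\Omega)}\leq \delta_r+\sup_{B(x_0,2r)}|\varphi-\varphi(x_0)|\to 0$.

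The main step I expect to be the obstacle is the verification that $\tilde\varphi_r\geq u$ on all of $\overline\Omega$, which is what allows $\tilde\varphi_r$ to be tested against $u$ on $\Omega$. Outside $B(x_0,2r)$ this is immediate from $\tilde\varphi_r=\varphi\geq u$, and on $B(x_0,r)$ from $\tilde\varphi_r=c_r\geq \sup_{B(x_0,2r)}u$. On the transition annulus, $\tilde\varphi_r(y)$ is a convex combination of $c_r$ and $\varphi(y)$, each of which already dominates $u(y)$ (the former by the $\delta_r$-choice, the latter by hypothesis), so the combination does as well. This combined use of the global bound $\varphi\geq u$ on one side and the semicontinuity control at $x_0$ on the other is the essential ingredient; everything else is bookkeeping.

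Once this is in hand, the subsolution hypothesis on $\Omega$ applied to $\tilde\varphi_r$ at $x_1$ yields $L[\tilde\varphi_r](x_1)\geq f(\varphi(x_1))$. I would then pass to the limit $r\to 0$. Since $\tilde\varphi_r\equiv\varphi$ in a fixed neighborhood of $x_1$ and $\tilde\varphi_r\to\varphi$ uniformly on $\overline\Omega$, the difference quotients $(\tilde\varphi_r(y)-\tilde\varphi_r(x_1))/|y-x_1|^\alpha$ coincide with those of $\varphi$ for $y$ near $x_1$ and converge uniformly in $y$ on every set $\{|y-x_1|\geq \rho\}$. Taking $\sup$ and $\inf$ in $y$ then yields $L[\tilde\varphi_r](x_1)\to L[\varphi](x_1)$, whence $L[\varphi](x_1)\geq f(\varphi(x_1))$, which is exactly the subsolution property for $u$ on $\Omega\setminus\{x_0\}$.
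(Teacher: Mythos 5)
Your proof is correct, and it follows the same overall strategy as the paper — replace the test function, which fails to be $C^1$ only at $x_0$, by $C^1(\Omega)$ test functions agreeing with it away from $x_0$, invoke the subsolution property of $u$ at the contact point $x_1$, and pass to the limit using the fact that the modification happens at a fixed positive distance from $x_1$ — but the execution of the key step differs. The paper first perturbs by $\delta|x-x_1|^2$ so that $u<\varphi_\delta$ with a strict gap on a fixed ball $\overline{B(x_0,\varepsilon_0)}$, then takes an (unspecified) sequence $\varphi_n\in C^1(\Omega)$ converging uniformly to $\varphi_\delta$ on that ball and equal to $\varphi_\delta$ outside it; the strict gap is what guarantees $u<\varphi_n$ for large $n$, and one concludes via a double limit $n\to\infty$, then $\delta\to 0$ (using Lemma \ref{lem:delta perturbation}). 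You instead make the approximation completely explicit: cap $\varphi$ by the constant $c_r=\varphi(x_0)+\delta_r$ on a shrinking ball $B(x_0,2r)$ via a cutoff, with $c_r\geq\sup_{B(x_0,2r)}u$ secured directly by the upper semicontinuity of $u$ at $x_0$ together with $\varphi(x_0)\geq u(x_0)$, so that $\tilde\varphi_r\geq u$ holds by inspection (convex combination on the annulus) and a single limit $r\to 0$ suffices, with $L[\tilde\varphi_r](x_1)\to L[\varphi](x_1)$ because the difference quotients are perturbed only for $y\in B(x_0,2r)$, which stays at distance at least $|x_1-x_0|/2$ from $x_1$. What your variant buys is self-containedness (no quadratic perturbation, no appeal to an abstract smoothing whose domination of $u$ must be argued separately) at the cost of a slightly more elaborate explicit construction; the paper's variant is shorter to state but leaves the construction of $\varphi_n$ and the strict-gap compactness argument implicit. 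Both are valid proofs of the proposition.
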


\begin{proof}
Let $\varphi\in C^1(\Omega\setminus \{x_0\})\cap C(\overline \Omega)$ such that $u\leq \varphi$ on $\overline{\Omega}$ with equality at some $x_1\in \Omega\setminus \{x_0\}$.
From Lemma \ref{lem:delta perturbation}, for $\delta>0$ $\varphi_{\delta}(x)=\varphi(x)+\delta |x-x_1|^2\in C^1(\Omega\setminus \{x_0\})\cap C(\overline \Omega)$ such that $u< \varphi_{\delta}$ for every $x\in \overline{\Omega}\setminus \{x_1\}$ with equality at $x_1$, and
$L[\varphi_{\delta}](x_1)\to L[\varphi](x_1)$ as $\delta\to 0$.

Fix $\delta>0$. We have $x_0\neq x_1$, let $\varepsilon_0>0$ be such that $\overline{B(x_0,\varepsilon_0)}\subseteq \Omega$ and not containing $x_1$. We construct a sequence $\varphi_n\in C^1(\Omega)$ converging uniformly to $\varphi_{\delta}$ in $\overline{B(x_0,\varepsilon_0)}$ and such that $\varphi_n= \varphi_{\delta}$ on $\Omega\setminus B(x_0,\varepsilon_0)$. We have $u<\varphi_{\delta}$ in $\overline{B(x_0,\varepsilon_0)}$ then for $n$ sufficiently large $u<\varphi_n$ in $\overline{B(x_0,\varepsilon_0)}$ and so $u<\varphi_n$ in $\overline{\Omega}\setminus\{x_1\}$ with equality at $x_1$. Since $u$ is a subsolution on $\Omega$, then
$$-L[\varphi_n](x_1)+f(\varphi_n(x_1))\leq 0.$$
But, we have that outside $\overline{B(x_0,\varepsilon_0)}$, $\varphi_n=\varphi_{\delta}$ and so, $\varphi_n(x_1)=\varphi_\delta(x_1)=\varphi(x_1)$. Therefore, one has
$$\sup_{y\in \overline{\Omega}\setminus \overline{B(x_0,\varepsilon_0)},\,y\neq x_1} \dfrac{\varphi_n(y)-\varphi_n(x_1)}{|y-x_1|^{\alpha}}=\sup_{y\in \overline{\Omega}\setminus \overline{B(x_0,\varepsilon_0)},\,y\neq x_1} \dfrac{\varphi_{\delta}(y)-\varphi_{\delta}(x_1)}{|y-x_1|^{\alpha}}.$$ 
Now, by uniform convergence of $\varphi_n$ and since $x_1 \notin \overline{B(x_0,\varepsilon_0)}$, then we have the following:
$$\lim_{n\to \infty} \sup_{y\in \overline{B(x_0,\varepsilon_0)}} \dfrac{\varphi_n(y)-\varphi_n(x_1)}{|y-x_1|^{\alpha}}= \sup_{y\in \overline{B(x_0,\varepsilon_0)}} \dfrac{\varphi_{\delta}(y)-\varphi_{\delta}(x_1)}{|y-x_1|^{\alpha}},$$
and similarly for the infimum.
Hence, we get that $\lim_{n\to \infty} L[\varphi_n](x_1)=L[\varphi_{\delta}](x_1),$
and so 
$$-L[\varphi_\delta](x_1)+f(\varphi_\delta(x_1)) \leq 0.$$
But, $\delta>0$ is arbitrary and $\varphi_\delta(x_1)=\varphi(x_1)$ so letting $\delta\to 0^+$, we infer that $-L[\varphi](x_1)+f(\varphi(x_1))\leq 0$, concluding that $u$ is a subsolution on $\Omega\setminus \{x_0\}$. $\qedhere$
\end{proof}

We next show a comparison principle when $f$ is non-decreasing which will help later in proving our H\"older estimates. 
\begin{proposition} \label{comparison principle}
Assume that $f$ is non-decreasing.
    Let $u$ be a subsolution (resp. supersolution) of $L[u]=f(u)$ and $v$ be a strict supersolution (resp. subsolution) such that $u \leq v$\, (resp. $u \geq v$) on \,$\partial \Omega$\, and \,$v \in C^1(\Omega) \cap C(\overline{\Omega})$. Then, $u < v$\, (resp. $u>v$) in $\Omega$.  
\end{proposition}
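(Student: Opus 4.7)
The plan is to argue by contradiction in the first case (\,$u$\, subsolution, \,$v$\, strict supersolution, $u \leq v$ on $\partial\Omega$). The parenthetical ``resp.'' case then follows by applying the first case to $-v$ and $-u$ after invoking Remark \ref{rmk:super}, noting that $\tilde{f}(s) := -f(-s)$ is non-decreasing whenever $f$ is.

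First I would suppose, towards a contradiction, that there exists some $x \in \Omega$ with $u(x) \geq v(x)$. Since $u$ is upper semicontinuous, $v$ is continuous, and $\overline{\Omega}$ is compact, the function $u - v$ attains its maximum $M$ on $\overline{\Omega}$, with $M \geq 0$. The boundary hypothesis $u \leq v$ on $\partial \Omega$ forces any boundary maximizer to satisfy $M \leq 0$, so if $M > 0$ the maximum is realized at some interior point $x_0 \in \Omega$; if $M = 0$, the assumed point $x$ already provides an interior maximizer. Either way, there is an interior point $x_0 \in \Omega$ with $u(x_0) - v(x_0) = M \geq 0$.

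Then I would introduce the test function $\varphi := v + M \in C^1(\Omega) \cap C(\overline{\Omega})$. Constants drop out of the difference quotients defining $L^{\pm}$, so $L[\varphi] \equiv L[v]$ on $\Omega$. By construction $u \leq \varphi$ on $\overline{\Omega}$ with equality at $x_0$, and hence the subsolution property gives
$$L[v](x_0) = L[\varphi](x_0) \geq f(\varphi(x_0)) = f(v(x_0) + M) = f(u(x_0)).$$
On the other hand, because $v \in C^1(\Omega) \cap C(\overline{\Omega})$, the function $v$ is admissible as its own test function, so the strict supersolution property at $x_0$ yields $L[v](x_0) < f(v(x_0))$. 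Chaining these two inequalities produces $f(u(x_0)) < f(v(x_0))$, contradicting the fact that $u(x_0) \geq v(x_0)$ together with the monotonicity of $f$.

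The only real obstacle I anticipate is the preliminary reduction to an interior maximizer (handled by the semicontinuity of $u - v$ and the boundary condition) and the observation that $L$ is invariant under adding a constant. Once these are in place, the core argument collapses into the two-line chain $f(u(x_0)) \leq L[v](x_0) < f(v(x_0))$, which is immediately incompatible with monotonicity of $f$ at $u(x_0) \geq v(x_0)$.
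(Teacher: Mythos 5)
Your proposal is correct and follows essentially the same route as the paper: locate an interior maximizer $x_0$ of $u-v$ with $M\geq 0$, test the subsolution $u$ against $v+M$, use that $L$ is unchanged by adding constants together with the monotonicity of $f$, and contradict the strict supersolution property of $v$ (which applies since $v\in C^1(\Omega)\cap C(\overline{\Omega})$ is its own test function). Your only additions are a more explicit justification that the maximizer can be taken interior and the reduction of the ``resp.'' case via Remark \ref{rmk:super}, both of which the paper leaves implicit.
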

\begin{proof}
    Assume this is not the case, i.e. there is a point $x^\star \in \Omega$ such that $u(x^\star)- v(x^\star)=\max\limits_{x \in \Omega} [u(x)-v(x)]:=M\geq 0$. Note that the maximum is attained since $u$ is upper semicontinuous and $v$ is continuous on $\overline \Omega$. We clearly have $u \leq v + M$\, on \,$\overline{\Omega}$ with $u(x^\star)=v(x^\star)+M$. Since $u$ is a subsolution and $v \in C^1(\Omega) \cap C(\overline{\Omega})$, then we must have 
    $$-L[v+M](x^\star) +f(v(x^\star)+M) \leq 0.$$
    Yet, \,$f$\, is non-decreasing. Hence, we get that
     $$-L[v](x^\star) +f(v(x^\star)) \leq 0.$$
     But, this contradicts the fact that $v$ is a strict supersolution which concludes the proof.
\end{proof}
Now, we prove the following stability result of subsolutions when $f$ is continuous.

\begin{proposition} \label{stability}
   Assume $f$ is continuous. Let $\mathcal F$ be a non-empty family of subsolutions to \eqref{Frac Inf Lap Eq}. 
    Define $v(x):=\sup_{u\in \mathcal F} u(x)<\infty$ and assume that $v$ is { upper semi-continuous on $\overline{\Omega}$}.
    Then, 
    $v$ is a subsolution to \eqref{Frac Inf Lap Eq}.
\end{proposition}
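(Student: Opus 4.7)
The plan is the standard viscosity stability argument, adapted to the non-local operator $L$ using the tools of Section~\ref{sec: Preliminary}. Let $\varphi\in C^1(\Omega)\cap C(\overline\Omega)$ touch $v$ from above at some $x_0\in\Omega$, so that $v\leq \varphi$ on $\overline\Omega$ with equality at $x_0$; I must show that $-L[\varphi](x_0)+f(\varphi(x_0))\leq 0$. First I would strictify the contact by replacing $\varphi$ with $\varphi_\delta(x):=\varphi(x)+\delta|x-x_0|^2$ for small $\delta>0$, so that $v<\varphi_\delta$ on $\overline\Omega\setminus\{x_0\}$ while $v(x_0)=\varphi_\delta(x_0)$; by Lemma~\ref{lem:delta perturbation}, $L[\varphi_\delta](x_0)\to L[\varphi](x_0)$ as $\delta\to 0^+$.

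Next, for each fixed $\delta>0$, I would pick $u_n\in\mathcal F$ with $u_n(x_0)\to v(x_0)=\varphi(x_0)$ and let $x_n\in\overline\Omega$ be a maximizer on the compact set $\overline\Omega$ of the upper semicontinuous function $w_n:=u_n-\varphi_\delta$. The crucial step is to show that $x_n\to x_0$: on one hand, $w_n(x_n)\geq w_n(x_0)=u_n(x_0)-\varphi(x_0)\to 0$; on the other hand, using $u_n\leq v$ pointwise together with the continuity of $v$, any subsequential limit $x^\star\in\overline\Omega$ of $(x_n)$ satisfies $\limsup_n w_n(x_n)\leq v(x^\star)-\varphi_\delta(x^\star)\leq 0$, with equality possible only at $x^\star=x_0$ by the strict inequality $v<\varphi_\delta$ off $x_0$. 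Hence $x_n\to x_0$, the quantities $c_n:=w_n(x_n)$ tend to $0$, and $x_n\in\Omega$ for $n$ large.

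At such an $x_n$ one has $u_n\leq\varphi_\delta+c_n$ on $\overline\Omega$ with equality at $x_n$, and since $L$ is invariant under additive constants, the subsolution property of $u_n$ applied with test function $\varphi_\delta+c_n\in C^1(\Omega)\cap C(\overline\Omega)$ gives
$$-L[\varphi_\delta](x_n)+f(\varphi_\delta(x_n)+c_n)\leq 0.$$
Letting $n\to\infty$, I would invoke Proposition~\ref{lem:continuity} to get $L[\varphi_\delta](x_n)\to L[\varphi_\delta](x_0)$ and the continuity of $f$ to get $f(\varphi_\delta(x_n)+c_n)\to f(\varphi(x_0))$; then sending $\delta\to 0^+$ and using Lemma~\ref{lem:delta perturbation} yields $-L[\varphi](x_0)+f(\varphi(x_0))\leq 0$. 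Since $v$ is continuous by hypothesis, hence upper semicontinuous, this completes the verification that $v$ is a subsolution.

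The main obstacle I expect is the localization step showing $x_n\to x_0$; the rest is essentially bookkeeping once one has Proposition~\ref{lem:continuity} and Lemma~\ref{lem:delta perturbation}. One cannot simply use the $u_n$ to touch $\varphi$ directly, because $u_n\leq v\leq\varphi$ need not be tight at $x_0$ in a useful way; the quadratic perturbation is essential to produce a genuine strict contact, and the pointwise bound $u_n\leq v$ coupled with the continuity of $v$ is precisely what prevents the maximizers $x_n$ from escaping to $\partial\Omega$ or from clustering at any point other than $x_0$.
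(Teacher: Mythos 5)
Your proposal is correct and follows essentially the same argument as the paper: strictify the contact with the quadratic perturbation $\varphi+\delta|x-x_0|^2$ (Lemma \ref{lem:delta perturbation}), take near-optimal $u_n\in\mathcal F$ and maximizers of $u_n-\varphi_\delta$, show they converge to $x_0$ with vanishing maxima, apply the subsolution property with the constant-shifted test function, and pass to the limit via Proposition \ref{lem:continuity} and continuity of $f$. The only difference is cosmetic: the paper runs the argument by contradiction and absorbs the $\delta$-perturbation at the outset, whereas you keep $\delta$ fixed and send $\delta\to 0^+$ at the end.
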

\begin{proof}
We show that $-L[v]+f(v)\leq 0$ in the viscosity sense. We proceed by contradiction. Assume there exists $\varphi\in C^1(\Omega)\cap C(\overline \Omega)$ such that $v(x)-\varphi(x)\leq 0$ on $\overline \Omega$ with equality at $x_0$ and such that $-L[\varphi](x_0)+f(\varphi(x_0))>0.$ 
Take $\varphi_{\delta}(x)=\varphi(x)+\delta|x-x_0|^2$, with $\delta>0$, a peturbation of $\varphi$. We have \,$v(x)\leq \varphi(x)<\varphi_{\delta}(x)$ for $x\neq x_0$ and $v(x_0)=\varphi(x_0)=\varphi_{\delta}(x_0)$. Hence, $x_0$ is the unique maximum to $v-\varphi_{\delta}$. From Lemma \eqref{lem:delta perturbation}, we have
$$-L[\varphi_{\delta}](x_0)+f(\varphi_{\delta}(x_0)) = -L[\varphi_{\delta}](x_0)+f(\varphi(x_0))\geq -L[\varphi](x_0)+f(\varphi(x_0))-4\delta\diam(\Omega)^{2-\alpha}>0,$$
for \,$\delta$\, small enough.

Since $v(x_0)=\sup_{u\in \mathcal F}u(x_0)$, then for every $n \in \mathbb{N}^\star$ there exists $u_n\in\mathcal F$ such that
\begin{equation}\label{ineq: unxn}
{v(x_0)-\dfrac{1}{n}   \leq u_n(x_0).}
\end{equation}
Let $M_n=\sup_{x\in \overline \Omega}[u_n(x)-\varphi_\delta(x)]$, which by upper semi-continuity of $u_n$ and compactness of $\overline \Omega$ is attained at some $y_n\in \overline \Omega$. We have
$$M_n\to 0 \qquad \mbox{and}\qquad y_n\to x_0 \qquad \mbox{as}\qquad n\to \infty.$$
{
Indeed, we clearly have that
$u_n\leq v\leq \varphi_{\delta}$ on $\overline{\Omega}$ and so, $M_n\leq 0$.
On the other hand, by \eqref{ineq: unxn} and since $v(x_0)=\varphi_\delta(x_0)$, one has 
$$M_n\geq u_n(x_0)-\varphi_{\delta}(x_0)\geq -\dfrac{1}{n}.$$
Letting $n\to \infty$, we get that $\lim_{n\to \infty} M_n=0$. To show that $y_n\to x_0$, we notice that $$-\dfrac{1}{n}\leq M_n=u_n(y_n)-\varphi_{\delta}(y_n)\leq v(y_n)-\varphi(y_n)-\delta|y_n-x_0|^2\leq -\delta |y_n-x_0|^2.$$
 Hence, $|y_n-x_0|\leq \dfrac{1}{\sqrt{\delta n}} \rightarrow 0$.
}

Now, we complete the proof of the proposition. We define $\varphi_n=\varphi_{\delta}+M_n$. Notice that $$u_n=(u_n-\varphi_{\delta})+\varphi_{\delta}\leq M_n+\varphi_{\delta}=\varphi_n,$$
with equality at $y_n$. Then, since $u_n\in \mathcal F$, we get
$$0\geq -L[\varphi_n](y_n)+f(\varphi_n(y_n))\geq -L[M_n+\varphi_{\delta}](y_n)+f(M_n+\varphi_{\delta}(y_n))=-L[\varphi_{\delta}](y_n)+f(M_n+\varphi_{\delta}(y_n)).$$
Hence, thanks to Lemma \eqref{lem:continuity}, and the continuity of $f$, we conclude that
$-L[\varphi_{\delta}](x_0)+f(\varphi_{\delta}(x_0))\leq 0$,
a contradiction.
\end{proof}

\begin{remark}\label{rmk:stability super}\rm
 From Remark \ref{rmk:super}, we obtain a similar stability result for supersolutions, that is, if $\mathcal G$ is a family of supersolutions to $L[u]=f(u)$ and if $w(x):=\inf_{u\in \mathcal G}u(x)$ is { lower semi-continuous in $\overline{\Omega}$}, then $w$ is also a supersolution.
\end{remark}


\subsection{Perron's Method}\label{sec:Perron} The aim of this subsection is to construct a viscosity solution to \eqref{Frac Inf Lap Eq} by applying the Perron's method. First, we start by constructing a sub/supersolution.
\begin{lemma} \label{Existence of subsupersolutions}
Assume $f$ is non-decreasing and continuous, and {{$g$ is continuous on $\partial\Omega$.}}
Then, there exist a subsolution $u^-$ and a supersolution $u^+$ to \eqref{Frac Inf Lap Eq} such that $u^- \leq u^+$ on \,$\overline{\Omega}$\, and \,$u^-=u^+=g$\, on \,$\partial\Omega$.
\end{lemma}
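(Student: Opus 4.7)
The plan is to assemble $u^\pm$ as inf/sup of anchored $\beta$-H\"older barriers, in the spirit of the classical Perron-barrier method. Without loss of generality I assume $\beta<\alpha$: since $\partial\Omega$ is bounded, $C^{0,\beta}(\partial\Omega) \subset C^{0,\beta'}(\partial\Omega)$ for any $0<\beta'\leq\beta$, so one can always reduce to an exponent in $(0,\alpha)$. Set $K := [g]_{C^{0,\beta}(\partial\Omega)}$ and, for a constant $\tilde K \geq K$ to be fixed below, define for every $y \in \partial\Omega$
\[
\phi_y^+(x) := g(y) + \tilde K\,|x-y|^\beta, \qquad \phi_y^-(x) := g(y) - \tilde K\,|x-y|^\beta.
\]
Each $\phi_y^\pm$ lies in $C^\infty(\Omega)\cap C(\overline\Omega)$ since $y\in\partial\Omega$. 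I will eventually put
\[
u^+(x) := \inf_{y\in\partial\Omega}\phi_y^+(x), \qquad u^-(x) := \sup_{y\in\partial\Omega}\phi_y^-(x).
\]

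The main step is to verify that each $\phi_y^+$ is a classical supersolution and each $\phi_y^-$ a classical subsolution of $L[u]=f(u)$ in $\Omega$, for a suitable $\tilde K$. By Proposition \ref{prop:alpha>beta}, there is a constant $C_0=C_0(\alpha,\beta)>0$ with $L[|\cdot-y|^\beta](x) \leq -C_0\,|x-y|^{\beta-\alpha}$ throughout $\Omega$. Using positive homogeneity and constant-invariance of $L$, together with $L[-\varphi]=-L[\varphi]$, I obtain
\[
L[\phi_y^+](x) \leq -C_0\tilde K\,|x-y|^{\beta-\alpha}, \qquad L[\phi_y^-](x) \geq C_0\tilde K\,|x-y|^{\beta-\alpha}.
\]
Since $\beta-\alpha<0$ and $|x-y|\leq \diam(\Omega)$, both estimates are uniformly controlled by $\pm C_0\tilde K\,\diam(\Omega)^{\beta-\alpha}$. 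Setting $M := \sup_{y\in\partial\Omega}|f(g(y))|<\infty$ (finite by continuity of $f$ on the compact set $g(\partial\Omega)$), I choose $\tilde K$ so that $C_0\tilde K\,\diam(\Omega)^{\beta-\alpha} \geq M$ and $\tilde K \geq K$. Monotonicity of $f$ combined with $\phi_y^+(x)\geq g(y)\geq \phi_y^-(x)$ then gives
\[
L[\phi_y^+](x) \leq -M \leq f(g(y)) \leq f(\phi_y^+(x)), \qquad L[\phi_y^-](x) \geq M \geq f(g(y)) \geq f(\phi_y^-(x)),
\]
which are the desired classical inequalities. A direct sup/inf comparison promotes these to the viscosity sense: if $\varphi\in C^1(\Omega)\cap C(\overline\Omega)$ touches $\phi_y^+$ from below at some $x_0\in\Omega$, then $\varphi(z)-\varphi(x_0)\leq \phi_y^+(z)-\phi_y^+(x_0)$ for every $z\in\overline\Omega$, hence $L^\pm[\varphi](x_0)\leq L^\pm[\phi_y^+](x_0)$, so $L[\varphi](x_0)\leq L[\phi_y^+](x_0)\leq f(\varphi(x_0))$; symmetric reasoning handles $\phi_y^-$.

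Being equi-$\beta$-H\"older in $x$, the families $\{\phi_y^\pm\}_{y\in\partial\Omega}$ yield $u^\pm \in C^{0,\beta}(\overline\Omega)$. By Proposition \ref{stability} and Remark \ref{rmk:stability super}, $u^+$ is then a viscosity supersolution and $u^-$ a viscosity subsolution of $L[u]=f(u)$. For the boundary trace, taking $y=x\in\partial\Omega$ in the definitions gives $\phi_x^\pm(x)=g(x)$, while for any other $y\in\partial\Omega$ the H\"older estimate combined with $\tilde K\geq K$ gives $\phi_y^+(x)\geq g(x)\geq \phi_y^-(x)$; consequently $u^+\equiv g\equiv u^-$ on $\partial\Omega$. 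Finally, for $x\in\overline\Omega$ and any $y_1,y_2\in\partial\Omega$,
\[
\phi_{y_1}^+(x)-\phi_{y_2}^-(x) = [g(y_1)-g(y_2)] + \tilde K\bigl(|x-y_1|^\beta+|x-y_2|^\beta\bigr) \geq (\tilde K - K)\bigl(|x-y_1|^\beta+|x-y_2|^\beta\bigr) \geq 0,
\]
using $|g(y_1)-g(y_2)|\leq K|y_1-y_2|^\beta\leq K(|x-y_1|^\beta+|x-y_2|^\beta)$, where the second bound comes from Lemma \ref{metric}. Taking the infimum over $y_1$ and the supremum over $y_2$ gives $u^+\geq u^-$ on $\overline\Omega$.

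The central obstacle is the calculation in the second paragraph, namely tuning $\tilde K$ uniformly in the anchor point $y$ so that each barrier is a classical sub/supersolution of the \emph{nonlinear} equation. Proposition \ref{prop:alpha>beta} supplies the correct sign of $L[\phi_y^\pm]$ with a quantitative rate $|x-y|^{\beta-\alpha}$ that is weakest in magnitude at the far corner of $\Omega$; it is precisely the strict inequality $\beta<\alpha$ that converts the diameter bound into a uniform positive gap, which can be amplified by choosing $\tilde K$ large. Monotonicity of $f$ is essential for the rest: without it one would need two-sided control on $f$ away from $g(y)$, but with it $f(\phi_y^\pm(x))$ is simply trapped on the correct side of $f(g(y))$, reducing everything to the single bound $M$ on $|f\circ g|$ over the compact set $g(\partial\Omega)$.
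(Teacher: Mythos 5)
Your proof is correct and follows essentially the same route as the paper: the same anchored barriers $g(y)\pm C|x-y|^\beta$, the sign estimate of Proposition \ref{prop:alpha>beta} with the gap $1-\Psi(r_\star)$, monotonicity of $f$ to trap the nonlinearity, the stability results (Proposition \ref{stability} and Remark \ref{rmk:stability super}) for the sup/inf, and the $\beta$-triangle inequality of Lemma \ref{metric} for the ordering $u^-\leq u^+$ and the boundary trace. The only differences are cosmetic (your constant $M=\sup_{y}|f(g(y))|$ in place of $f(\pm\|g\|_\infty)$, and your explicit promotion of the classical barrier inequality to the viscosity sense, which the paper leaves implicit).
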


{ 
\begin{proof}
Fix $\beta<\alpha$. For $x_0\in \partial \Omega$, $a\in \mathbb R$ and $b\geq 0$, we define the function $\phi_{x_0,a,b}$ on $\overline{\Omega}$ as follows:
$$\phi_{x_0,a,b}(x)=a-b|x-x_0|^{\beta}.$$
Recalling Proposition \ref{prop:alpha>beta}, we have  
\begin{equation}\label{ineq:Lphi}
L [\phi_{x_0,a,b}](x)=-b\, L[|x-x_0|^{\beta}]\geq -b\,|x-x_0|^{\beta-\alpha}\left(\dfrac{r_{\star}^{\beta}-1}{(r_{\star}-1)^\alpha}-1\right) \geq -b \,\diam(\Omega)^{\beta-\alpha}\left(\dfrac{r_{\star}^{\beta}-1}{(r_{\star}-1)^\alpha}-1\right),
\end{equation}
for all $x\in \Omega$. We take the set 
$$S=\bigg\{(x_0,a,b)\in \partial \Omega\times \mathbb R\times [0,\infty):  L[\phi_{x_0,a,b}(x)]\geq f(\phi_{x_0,a,b}(x))\,\,\,\,\text{in\,\, $\Omega$},\,\,{\phi_{x_0,a,b}}\leq g\,\,\,\mbox{on}\,\,\,\partial\Omega\bigg\}.$$
Notice that $S\neq \emptyset$. Indeed, using the fact that $f$ is non-decreasing and inequality \ref{ineq:Lphi}, $(x_0,a,b)\in S$ as soon as $x_0\in \partial \Omega$, \,$a\leq\min g$, and 
\begin{equation}\label{eq:b large}
b\geq \frac{\mbox{diam}(\Omega)^{\alpha-\beta}\,f(a)}{\bigg[1-\dfrac{r_{\star}^{\beta}-1}{(r_{\star}-1)^\alpha}\bigg]}.
\end{equation}
Notice also that for every $(x_0,a,b)\in S$, we have $a=\phi_{x_0,a,b}(x_0)\leq g(x_0)\leq \max g$ and so, $\phi_{x_0,a,b}(x)\leq \max g$, for all $x\in \overline{\Omega}.$
We then define $$u^-(x)=\sup_{(x_0,a,b)\in S}\phi_{x_0,a,b}(x).$$ We clearly have $u^-(x)<\infty$. Now, we show that $u^-=g$ on $\partial \Omega$. By definition of $S$, we have that $\phi_{x_0,a,b}\leq g$ on $\partial \Omega$ for every $(x_0,a,b)\in S$ and so, $u^-=\sup_{(x_0,a,b)\in S}\phi_{x_0,a,b} \leq g$ on $\partial \Omega$.
 
Given $\varepsilon>0$, by uniform continuity of $g$ there exists $r>0$ such that for all $x,y\in \partial \Omega$ with $|x-y|<r$, one has  
$$|g(x)-g(y)|<\epsilon.$$
 Fix $x_0\in \partial \Omega$. Take $a_{\varepsilon}=g(x_0)-\varepsilon$\, and \,$b_\epsilon\geq\frac{\mbox{diam}(\Omega)^{\alpha-\beta}\,f(a_\epsilon)}{\bigg[1-\dfrac{r_{\star}^{\beta}-1}{(r_{\star}-1)^\alpha}\bigg]}$. So, we have $L[\phi_{x_0,a_\epsilon,b_\epsilon}]\geq f(\phi_{x_0,a_\epsilon,b_\epsilon})$. Moreover, assume that 
$$b_{\epsilon}\geq \dfrac{g(x_0)-\epsilon -\min g}{r^{\beta}}.$$
Then, one has
$$\phi_{x_0,a_{\varepsilon},b_{\varepsilon}}(x)=g(x_0)-\varepsilon - b_\varepsilon |x-x_0|^\beta \leq g(x) \qquad\mbox{for all}\,\,\,x \in \partial\Omega.$$
Hence, $(x_0,a_{\epsilon},b_{\epsilon})\in S.$ In particular, we deduce that
$$u^-(x_0)\geq  \phi_{x_0,a_{\varepsilon},b_{\varepsilon}}(x_0)=g(x_0)-\varepsilon.$$
Letting $\varepsilon\to 0$, we get that $u^{-}(x_0)\geq g(x_0)$, concluding that $u^-(x_0)=g(x_0),$ for every $x_0\in \partial \Omega$.

We next prove that $u^-\in C(\overline{\Omega})$. Take $y_0\in \overline{\Omega}$ and sequence $y_n\in \overline \Omega$ that converges to $y_0$. Since
$u^-$ is the supremum of continuous functions, then it is lower semi-continuous and so,
$$u^-(y_0)\leq \liminf_{n\to \infty} u^-(y_n).$$
Let $(x_n,a_n,b_n)\in S$ be such that
\begin{equation}\label{phin}
    u^-(y_n)-\dfrac{1}{n}\leq \phi_{x_n,a_n,b_n}(y_n)\leq u^-(y_n).
\end{equation}
Assume $b_n$ has no unbounded subsequence that is $\lim_{n\to \infty}b_n=\infty$. Since $a_n \leq g(x_n)$, we have
$$u^-(y_n)-\dfrac{1}{n}\leq a_n-b_n|y_n-x_n|^{\beta}\leq \max g-b_n|y_n-x_n|^{\beta}.$$
So, $$|y_n-x_n|^{\beta}\leq \dfrac{\max g-\inf u^-+\dfrac{1}{n}}{b_n}.$$
Since $u^-$ is lower semi-continuous, so the infimum of $u^-$ is finite and then, $y_n-x_n\to 0$, concluding that $x_n\to y_0$ and so, $y_0\in \partial \Omega.$
We then get
$$u^-(y_0)=g(y_0)=\limsup_{n\to \infty}\, g(x_n)\geq \limsup_{n\to \infty} a_n\geq \limsup_{n\to \infty}  \phi_{x_n,a_n,b_n}(y_n)\geq \limsup_{n\to \infty} \bigg[u^-(y_n)-\dfrac{1}{n}\bigg].$$
Hence, $$\limsup_{n\to \infty} u^-(y_n)\leq u^-(y_0).$$

Assume now that $b_n$ has a bounded subsequence (we still call it $b_n$). Yet, we note that thanks to \eqref{phin} and the fact that $u^-$ is bounded from below and $a_n \leq \max g$, then $a_n$ is always bounded. So, we can extract subsequences, $(x_{n},a_n,b_n)\to (x_1,a_1,b_1)\in \partial \Omega\times \mathbb R\times [0,\infty).$
From \eqref{phin},
$$u^-(y_n)\leq a_n-b_n|y_n-x_n|^{\beta}+\dfrac{1}{n}.$$
So,
$$\limsup_{n\to \infty} u^-(y_n)\leq a_1-b_1|y_0-x_1|^{\beta}=\lim_{n\to \infty} a_n-b_n|y_0-x_n|^\beta=\lim_{n\to \infty} \phi_{x_n,a_n,b_n}(y_0)\leq u^-(y_0) .$$

We deduce that $u^-\in C(\overline{\Omega})$ and so we can use the stability result in Proposition \ref{stability} to conclude that $u^-$ is a subsolution to \eqref{Frac Inf Lap Eq} and $u^-=g$ on $\partial \Omega.$

Similarly, we define 
$$S^\prime=\bigg\{(x_0,a,b)\in \partial \Omega\times \mathbb R\times [0,\infty):  L[\phi_{x_0,a,-b}]\leq f(\phi_{x_0,a,-b})\,\,\,\,\text{in\,\, $\Omega$},\,\,{\phi_{x_0,a,-b}}\geq g\,\,\,\mbox{on}\,\,\,\partial\Omega\bigg\}.$$
Using the same approach, one can show that $u^+=\inf\limits_{(x_0,a,b)\in S^\prime} \phi_{x_0,a,-b}$ is in $C(\overline{\Omega})$ and is a supersolution to \eqref{Frac Inf Lap Eq} with $u^+=g$ on $\partial \Omega$.

Finally, we show that $u^-\leq u^+.$
Take $(x_0,a_0,b_0)\in S$ and $(x_0',a_0',b_0')\in S'$. Since $\beta<1$ then $\phi_{x_0,a_0,b_0}-\phi_{x_0',a_0',-b_0'}$ is a convex function over the compact domain $\overline{\Omega}$, then its global maximum is attained on $\partial \Omega$. But by definition of $S$ and $S'$, 
$\phi_{x_0,a_0,b_0}\leq g\leq \phi_{x_0',a_0',-b_0'}$ on $\partial\Omega$ concluding that $\phi_{x_0,a_0,b_0}\leq \phi_{x_0',a_0',-b_0'}$ on $\overline{\Omega}$. Yet, this is true for every $(x_0,a_0,b_0)\in S$ and $(x_0',a_0',b_0')\in S'$. Hence, $u^-\leq u^+$. $\qedhere$
\end{proof}

{
\begin{remark} \label{RK3.9}
Assume $g \geq 0$ on $\partial\Omega$. Then, for $(x_0,a_0,b_0)\in S'$, we have $\phi_{x_0,a_0,-b_0}\geq a_0 =\phi_{x_0,a_0,-b_0}(x_0)\geq g(x_0) \geq \min g$. Hence, we get that $u^+\geq 0$; this observation will be needed in Section \ref{sec:obstacle}.
\end{remark}
}
}
Now, we are ready to prove Theorem \ref{thm:Main 1}. First, we show the regularity of subsolutions and then prove the existence result.
\begin{proposition} \label{beta Holder regularity}
Assume $f$ is non-decreasing and continuous. Let \,$u$\, be a bounded viscosity subsolution of \eqref{Frac Inf Lap Eq}. Then,
$u$ is locally $\beta-$H\"olderian, for any $0<\beta <\alpha$. More precisely, we have
  $$[u]_{C^{0,\beta}(\omega)} \leq \max\bigg\{\frac{2||u||_\infty}{\dist(\omega,\partial\Omega)^\beta},\frac{[\mbox{diam}(\Omega)]^{\alpha-\beta}\,[f(-||u||_\infty)]_-}{1-\Psi(r_\star)}\bigg\},\qquad\mbox{for every}\,\,\,w \subset\subset \Omega,$$
 where using the notation of Proposition \ref{prop:alpha>beta}
  $$\Psi(r_\star)=\dfrac{r_\star^{\beta}-1}{(r_\star-1)^{\alpha}}<1.$$
  In addition, assume that $g \in C^{0,\beta}(\partial\Omega)$, if $u$ is a bounded viscosity solution of \eqref{Frac Inf Lap Eq} with $u=g$ on $\partial\Omega$, then $u$ is $\beta-$H\"olderian in $\overline{\Omega}$, and we have the following estimate:
     $$||u||_{C^{0,\beta}(\overline\Omega)} \leq C\bigg(\alpha,\,\beta,\,\mbox{diam}(\Omega),\,||g||_{C^{0,\beta}(\partial\Omega)},\,[ f(\pm||g||_\infty)]_\pm,\,[f(-||u||_\infty)]_-\bigg).$$
     \end{proposition}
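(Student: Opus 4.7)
The plan is to proceed in three steps: a local interior estimate using only the subsolution property, a one-point boundary H\"older estimate for solutions via the comparison principle, and a global estimate obtained by repeating the interior argument with the $L^\infty$ bound replaced by the boundary estimate.

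\textbf{Local estimate.} Fix $\omega \subset\subset \Omega$, set $d = \dist(\omega, \partial\Omega)$ and let $C_0$ denote the right-hand-side maximum. For any $C > C_0$ and any $x_0 \in \omega$, I argue by contradiction that $u(x) - u(x_0) \leq C|x-x_0|^\beta$ on $\overline{\Omega}$. If the supremum $M = \sup_{x \in \overline{\Omega}}[u(x) - u(x_0) - C|x-x_0|^\beta]$ is positive and attained at some $x_1$, the cases $|x_1 - x_0| \geq d$ and $|x_1 - x_0| < d$ are treated separately. The first is immediate from $2\|u\|_\infty \leq Cd^\beta$. In the second, $x_1 \in \Omega$ with $x_1 \neq x_0$, and the function $\varphi(x) := u(x_0) + M + C|x-x_0|^\beta$ touches $u$ from above at $x_1$. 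Since $\varphi$ fails to be $C^1$ only at $x_0 \in \Omega$, I invoke Proposition \ref{subsolution on domain minus point}: $u$ is still a subsolution on $\Omega \setminus \{x_0\}$, so $-L[\varphi](x_1) + f(\varphi(x_1)) \leq 0$. Using that $L$ ignores additive constants and is positively $1$-homogeneous, together with the bound $L[\psi_{\beta, x_0}](x_1) \leq -|x_1-x_0|^{\beta-\alpha}(1-\Psi(r_\star))$ from Proposition \ref{prop:alpha>beta} and $|x_1-x_0| \leq \diam(\Omega)$ (with $\beta < \alpha$), and the monotonicity of $f$ which gives $f(u(x_1)) \geq -[f(-\|u\|_\infty)]_-$, leads to $C \leq C_0$, a contradiction. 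Letting $C \searrow C_0$ and then swapping $x, x_0 \in \omega$ gives the two-sided seminorm estimate.

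\textbf{Boundary estimate.} For each $x_0 \in \partial\Omega$, I use the one-point barriers $\varphi^{\pm}(x) := g(x_0) \pm C|x-x_0|^\beta$. The calculation in Lemma \ref{Existence of subsupersolutions} (with strict inequality) shows that, for $C$ large enough in terms of $\alpha, \beta, \diam(\Omega), \|g\|_\infty$ and $[f(\pm\|g\|_\infty)]_\pm$, the function $\varphi^+$ is a strict supersolution and $\varphi^-$ a strict subsolution of \eqref{Frac Inf Lap Eq}, and the H\"older regularity of $g$ yields $\varphi^- \leq g \leq \varphi^+$ on $\partial\Omega$ as soon as $C \geq [g]_{C^{0,\beta}(\partial\Omega)}$. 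The comparison principle (Proposition \ref{comparison principle}) then produces $|u(x) - g(x_0)| \leq C|x-x_0|^\beta$ on $\overline{\Omega}$.

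\textbf{Global estimate.} I rerun the local argument but with $\omega = \overline{\Omega}$. The only new situation is $x_1 \in \partial\Omega$: if $x_0$ is also on $\partial\Omega$, the $C^{0,\beta}$ regularity of $g$ kills $M$; if $x_0 \in \Omega$, the boundary estimate just established gives $u(x_0) \geq g(x_1) - C|x_1-x_0|^\beta$ and again $M \leq 0$. When $x_1 \in \Omega$, the interior subsolution argument goes through unchanged (Proposition \ref{subsolution on domain minus point} is used when $x_0 \in \Omega$ but is unnecessary when $x_0 \in \partial\Omega$, since $\varphi$ is already $C^1$ on $\Omega$ in that case). Choosing $C$ to exceed all thresholds yields the global $\beta$-H\"older estimate with the stated dependencies.

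\textbf{Main obstacle.} The crucial technical ingredient throughout is Proposition \ref{subsolution on domain minus point}: the natural barrier $|x-x_0|^\beta$ is not $C^1$ at its center, and without the ability to remove the offending point from the viscosity subsolution property, one could not legitimately use it as a test function when $x_0$ sits in $\Omega$. A secondary subtlety in the boundary estimate is maintaining strict sub/super-solution status when applying the comparison principle, which is exactly what forces the $[f(\pm\|g\|_\infty)]_\pm$ dependence of the global constant rather than a dependence on $\|u\|_\infty$ through the boundary data.
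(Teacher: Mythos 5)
Your proposal is correct and follows essentially the same route as the paper: the barrier $u(x_0)+C|x-x_0|^\beta$ together with Propositions \ref{prop:alpha>beta} and \ref{subsolution on domain minus point} for the interior seminorm bound, and the one-point boundary barriers $g(x_0)\pm C|x-x_0|^\beta$ with the comparison principle for the boundary and global estimates. The only cosmetic differences are that you run the interior comparison directly as a touching-at-the-maximum argument rather than invoking Proposition \ref{comparison principle} on $\Omega\setminus\{x_0\}$, and that you spell out explicitly how the interior and boundary estimates combine into the global two-point bound, a step the paper leaves implicit.
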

  \begin{proof}
Fix $x_0 \in \omega \subset\subset \Omega$. Thanks to Proposition \ref{subsolution on domain minus point}, $u$ is a viscosity subsolution of \eqref{Frac Inf Lap Eq} on $\Omega\backslash\{x_0\}$. Now, we define 
    $$v(x)=u(x_0)+C|x-x_0|^\beta.$$
    Recalling Proposition \ref{prop:alpha>beta}, $v$ is a strict supersolution on $\Omega\backslash\{x_0\}$ since 
    $$-L[u(x_0)+C|x-x_0|^\beta]+f(u(x_0)+C|x-x_0|^\beta) \geq -C L[|x-x_0|^\beta] + f(-||u||_\infty)>0$$
as soon as $C\geq \frac{[\mbox{diam}(\Omega)]^{\alpha-\beta}\,[f(-||u||_\infty)]_-}{1-\Psi(r_\star)}$.
    Moreover, one has $v(x_0)=u(x_0)$, and for every $x\in \partial \Omega$, we have
    $$u(x) -v(x)=u(x)-u(x_0)-C|x-x_0|^\beta \leq 2||u||_\infty -C \,\dist(\omega,\partial\Omega)^\beta \leq 0 $$
    as soon as we choose the constant $C \geq 2||u||_\infty/\dist(\omega,\partial\Omega)^\beta$. Thanks to the comparison principle in Proposition \ref{comparison principle}, we infer that $u <v$ in $\Omega\backslash\{x_0\}$ and so, we have
    $$u(x) \leq u(x_0)+C|x-x_0|^\beta\qquad\mbox{for all}\,\,\,\,x \in \omega.$$
    Interchanging the role of $x_0$ and $x$, we get that
    $$|u(x)-u(x_0)| \leq C|x-x_0|^\beta.$$
    This shows that $u \in C^{0,\beta} (\omega)$.
    
Now, let us show the second statement. Assume $g \in C^{0,\beta}(\partial\Omega)$. Fix $x_0 \in \partial\Omega$. Then, we set the function 
    $$w^+(x)=g(x_0)+C\,|x-x_0|^\beta,$$
    where $C \geq [g]_{\beta,\partial\Omega}$.
    Since $g \in C^{0,\beta}(\partial\Omega)$ and $u=g$ on $\partial\Omega$, then we have for every $x \in \partial\Omega$ the following inequality:
    $$u(x)=g(x)\leq g(x_0)+C\,|x-x_0|^\beta=w^+(x).$$
    Moreover, one can show that $w^+$ is a strict supersolution provided that $C$ is large enough. Indeed,
\begin{align*}
-L[w^+]+f(w^+)&=-C\,L[\psi_{\beta,x_0}]+f(g(x_0)+C\,|x-x_0|^\beta)\geq C |x-x_0|^{\beta-\alpha}[1-\Psi(r_\star)] +f(-||g||_\infty) > 0
    \end{align*}
    provided that $$C\geq  \frac{\diam(\Omega)^{\alpha-\beta}\,[f(-||g||_\infty)]_-}{1-\Psi(r_\star)}.$$
    Thanks again to the comparison principle in Proposition \ref{comparison principle}, we get that \,$u <w^+$\, in $\Omega$. Therefore, one has
    $$u(x) \leq g(x_0)+C\,|x-x_0|^\beta,\qquad\mbox{for all}\,\,\,x \in \overline{\Omega}.$$
    In the same way, we set  $$w^-(x)=g(x_0)-C\,|x-x_0|^\beta,$$
    where $C>0$ is a large constant that we will choose later. We claim that $w^-$ is a strict subsolution. In fact, we have the following:
    $$ -L[w^-]+f(w^-)=C\,L[\psi_{\beta,x_0}]+f(g(x_0)-C\,|x-x_0|^\beta)
    \leq C |x-x_0|^{\beta-\alpha}[\Psi(r_\star) -1] + f(||g||_\infty) < 0$$
    provided that $$C\geq  \frac{\diam(\Omega)^{\alpha-\beta}\,[f(||g||_\infty)]_+}{1-\Psi(r_\star)}.$$
    \\
    On the other hand, assuming that the constant $C \geq [g]_{\beta,\partial\Omega}$ then we get thanks to the H\"older regularity of $g$ that 
     $$u(x)=g(x) \geq w^-(x)=g(x_0)-C\,|x-x_0|^\beta,\qquad\mbox{for all}\,\,\,x \in \partial{\Omega}.$$
     Hence,
     $$u(x) \geq w^-(x),\qquad\mbox{for all}\,\,\,x \in \overline{\Omega}.$$
     Consequently,
     $$w^-(x) \leq u(x) \leq w^+(x),\qquad\mbox{for all}\,\,\,x \in \overline{\Omega}.$$
     Yet, $u(x_0)=g(x_0)$. Then, 
      $$|u(x)-u(x_0)| \leq C\,|x-x_0|^\beta,\qquad\mbox{for all}\,\,\,x \in \overline{\Omega}. $$
    \end{proof}
    {{
    \begin{proposition}\label{alpha Holder reg}
    Assume \,$0<\alpha <1$, $f \geq 0$ and $f$ is non-decreasing and continuous, then any bounded viscosity subsolution $u$ of \eqref{Frac Inf Lap Eq} is locally $\alpha-$H\"olderian. In addition, we have
    $$[u]_{C^{0,\alpha}(\omega)} \leq \frac{2||u||_\infty}{\dist(\omega,\partial\Omega)^\alpha},\qquad\mbox{for every}\,\,\,w \subset\subset \Omega.$$
    \end{proposition}}}
    \begin{proof}
    {{
 We will follow the same argument as in Proposition \ref{beta Holder regularity}.
 Let $\omega \subset \subset \Omega$ and fix $x_0 \in \omega$. Set   $$v(x)=u(x_0)+C|x-x_0|^\alpha.$$
    Thanks to Propositon \ref{alpha=beta} and the fact that $f \geq 0$, one has
    $$-L[u(x_0)+C|x-x_0|^\alpha]+f(u(x_0)+C|x-x_0|^\alpha) \geq -C L[|x-x_0|^\alpha]>0.$$
    Hence, $v$ is a strict supersolution in $\Omega\backslash\{x_0\}$. 
    On the other hand, $v(x_0)=u(x_0)$. For every $x\in \partial \Omega$, we also have
    $$u(x) -v(x)=u(x)-u(x_0)-C|x-x_0|^\alpha \leq 2||u||_\infty -C \,\dist(\omega,\partial\Omega)^\alpha  \leq 0$$
    provided that $C$ is large enough. By the comparison principle \eqref{comparison principle}, this yields that $u <v$ in $\Omega\backslash\{x_0\}$. Hence, we get
    $$u(x) \leq u(x_0)+C|x-x_0|^\alpha\qquad\mbox{for all}\,\,\,\,x \in \omega. \qedhere$$
 }}
 \end{proof}
Now, we are ready to prove our existence result.
\begin{theorem} \label{existence of a solution}
Under the assumptions that $f$ is non-decreasing and continuous, and {{$g$ is continuous}} on $\partial\Omega$, there exists a viscosity solution $u$ to
    Problem \eqref{Frac Inf Lap Eq}.
\end{theorem}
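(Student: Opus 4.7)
The plan is to follow the classical Perron method. Let $u^{\pm}$ be the ordered sub and supersolutions furnished by Lemma \ref{Existence of subsupersolutions}, so that $u^-\leq u^+$ on $\overline{\Omega}$ and $u^-=u^+=g$ on $\partial\Omega$. I would then introduce the family
\[
\mathcal{F}:=\bigl\{v\in C(\overline{\Omega}):\ v \text{ is a viscosity subsolution of } L[v]=f(v),\ u^-\le v\le u^+ \text{ on } \overline{\Omega}\bigr\},
\]
which is non-empty since $u^-\in\mathcal F$, and set $u(x):=\sup_{v\in\mathcal F}v(x)$. The sandwich $u^-\leq u\leq u^+$ gives $u=g$ on $\partial\Omega$ for free.

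The first substantial step is to verify that $u\in C(\overline\Omega)$, which is the hypothesis needed to apply the stability result. Every $v\in\mathcal F$ is uniformly bounded by $M:=\max(\|u^-\|_\infty,\|u^+\|_\infty)$, and since $f$ is non-decreasing we have $[f(-\|v\|_\infty)]_-\leq [f(-M)]_-$, so Proposition \ref{beta Holder regularity} applied to each $v$ yields a \emph{uniform} local $\beta$-H\"older estimate on every $\omega\subset\subset\Omega$ (constant depending on $M$, $f$, $\alpha$, $\beta$, $\diam(\Omega)$ and $\dist(\omega,\partial\Omega)$ only). Passing to the supremum preserves this modulus of continuity, so $u$ is locally $\beta$-H\"older on $\Omega$; the squeeze $u^-\leq u\leq u^+$ together with $u^{\pm}\in C^{0,\beta}(\overline\Omega)$ and $u^-=u^+=g$ on $\partial\Omega$ then yields continuity up to $\partial\Omega$. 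Proposition \ref{stability} now ensures that $u$ is a viscosity subsolution.

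The main obstacle is to show that $u$ is also a viscosity supersolution, for which I would use the standard Perron bump-up argument, adapted to the nonlocal setting. Assume towards contradiction that it fails: there exist $x_0\in\Omega$ and $\varphi\in C^1(\Omega)\cap C(\overline\Omega)$ with $u\geq\varphi$ on $\overline\Omega$, $u(x_0)=\varphi(x_0)$, and $-L[\varphi](x_0)+f(\varphi(x_0))<0$. Replacing $\varphi$ by $\varphi-\eta|x-x_0|^2$ with $\eta>0$ small is harmless by Lemma \ref{lem:delta perturbation} and makes the touching strict: $u>\varphi$ on $\overline\Omega\setminus\{x_0\}$. Note that $u(x_0)<u^+(x_0)$, for otherwise $\varphi$ would touch $u^+$ from below at $x_0$ and the supersolution property of $u^+$ would give $-L[\varphi](x_0)+f(\varphi(x_0))\geq 0$. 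Using the shift invariance $L[\varphi+\delta]=L[\varphi]$, the continuity of $L[\varphi]$ (Proposition \ref{lem:continuity}) and of $f$, for $\delta>0$ sufficiently small we can pick a ball $B_r(x_0)\subset\Omega$ on which $-L[\varphi+\delta]+f(\varphi+\delta)<0$ and $\varphi+\delta<u^+$; moreover, by the strict separation $u>\varphi$ on the compact set $\overline\Omega\setminus B_{r/2}(x_0)$, after possibly shrinking $\delta$ we also secure $\varphi+\delta<u$ there.

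Define
\[
w(x):=\begin{cases}\max\bigl(u(x),\varphi(x)+\delta\bigr),& x\in\overline{B_{r/2}(x_0)},\\ u(x),& x\in\overline\Omega\setminus\overline{B_{r/2}(x_0)}.\end{cases}
\]
Then $w\in C(\overline\Omega)$, $u^-\leq w\leq u^+$, and $w(x_0)=\varphi(x_0)+\delta>u(x_0)$. The key remaining check is that $w$ is a subsolution. If a test function $\psi$ touches $w$ from above at some $y$ with $w(y)=u(y)$, then $\psi\geq u$ on $\overline\Omega$ with equality at $y$, and the subsolution property of $u$ applies. If instead $w(y)=\varphi(y)+\delta>u(y)$, then $y\in B_{r/2}(x_0)$ and, because $w\geq\varphi+\delta$ on all of $\overline\Omega$ (inside the ball by definition, outside by strict separation), we have $\psi\geq\varphi+\delta$ on $\overline\Omega$ with $\psi(y)=\varphi(y)+\delta$; taking supremum and infimum in $z$ of the pointwise inequality
\[
\frac{\psi(z)-\psi(y)}{|z-y|^\alpha}\ \geq\ \frac{(\varphi(z)+\delta)-(\varphi(y)+\delta)}{|z-y|^\alpha}
\]
gives $L[\psi](y)\geq L[\varphi+\delta](y)$, so
\[
-L[\psi](y)+f(\psi(y))\ \leq\ -L[\varphi+\delta](y)+f(\varphi(y)+\delta)\ <\ 0.
\]
Hence $w\in\mathcal F$ with $w(x_0)>u(x_0)$, contradicting the definition of $u$. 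This contradiction shows $u$ is a supersolution, and therefore a viscosity solution of \eqref{Frac Inf Lap Eq} with $u=g$ on $\partial\Omega$.
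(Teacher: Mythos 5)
Your proposal is correct, and it follows the paper's overall Perron skeleton: the same family of continuous subsolutions sandwiched between the $u^\pm$ of Lemma \ref{Existence of subsupersolutions}, the same use of the uniform local estimate of Proposition \ref{beta Holder regularity} plus Proposition \ref{stability} to get that the supremum is a continuous subsolution equal to $g$ on $\partial\Omega$, and the same preliminary observations in the supersolution step (quadratic perturbation via Lemma \ref{lem:delta perturbation} to make the touching strict, and the claim $u(x_0)<u^+(x_0)$ via the supersolution property of $u^+$). Where you genuinely diverge is in how the bumped competitor is shown to be admissible. The paper takes the \emph{global} bump $u_\zeta=\max\{u,\varphi+\zeta\}$ and proves it is a subsolution by contradiction: a violating test function $\varphi_\zeta$ at $x_\zeta$ is analyzed, the nonlocal monotonicity $L[\varphi](x_\zeta)\le L[\varphi_\zeta](x_\zeta)$ is used, and then one must show $x_\zeta\to x_0$ as $\zeta\to 0$ and pass to the limit (using Proposition \ref{lem:continuity} and continuity of $f$) to contradict the strict inequality at $x_0$. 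You instead exploit the continuity of $x\mapsto L[\varphi](x)$ \emph{up front} to find a ball on which $\varphi+\delta$ is a strict subsolution pointwise, localize the bump to that ball (securing $\varphi+\delta<u$ on the complement and $\varphi+\delta<u^+$ on the ball), and then verify the subsolution property of $w=\max(u,\varphi+\delta)$ directly through the two-case test-function argument, with the same monotonicity step $L[\psi](y)\ge L[\varphi+\delta](y)$. Your route works for a single fixed $\delta$ and avoids the family of bumps and the convergence argument $x_\zeta\to x_0$, at the modest cost of the localization bookkeeping (matching on the annulus, the bound by $u^+$ near $x_0$); the paper's route avoids localization but needs the limiting argument. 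The only point worth spelling out in a final writeup is that making $-L[\varphi+\delta]+f(\varphi+\delta)<0$ hold on a whole ball uniformly for small $\delta$ uses uniform continuity of $f$ on the compact range of $\varphi$ near $x_0$ (or its monotonicity), which is immediate but should be stated.
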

\begin{proof}
{{From Lemma \ref{Existence of subsupersolutions}, we know that there exist a subsolution $u^-$ and a supersolution $u^+$ to \eqref{Frac Inf Lap Eq} with $u^\pm \in C(\overline{\Omega})$, $u^- \leq u^+$ on \,$\overline{\Omega}$\, and \,$u^+=u^-=g$\, on \,$\partial\Omega$.}}
Set 
$$S=\bigg\{w \in C(\overline{\Omega}) \,\,\,\mbox{is a subsolution}\,:\,\,u^- \leq w \leq u^+\,\,\,\mbox{on}\,\,\,\overline{\Omega}\bigg\}.$$
First, we note that \,$S \neq \emptyset$\, since the subsolution $u^-$ constructed in Lemma \ref{Existence of subsupersolutions} belongs to $S$. Then, we define the function 
$$u=\sup_{w \in S} w.$$
For any $w \in S$, we clearly have  $$||w||_\infty \leq \max\{||u^+||_\infty,||u^-||_\infty\}:=\Lambda.$$
On the other hand, by Proposition \ref{beta Holder regularity} , one has
 $$[w]_{C^{0,\beta}(\omega)} \leq \max\bigg\{\frac{2\Lambda}{\dist(\omega,\partial\Omega)^\beta},\frac{[\mbox{diam}(\Omega)]^{\alpha-\beta}\,[f(-\Lambda)]_-}{1-\Psi(r_\star)}\bigg\},\qquad\mbox{for every}\,\,\,w \subset\subset \Omega.$$\\
{{Hence, we infer that $u$ is locally $\beta-$H\"older as it is the supremum of uniformly locally $\beta-$H\"older functions. So in particular, $u$ is continuous in $\Omega$.}}
{{Yet, $u$ is continuous on $\partial\Omega$ since $u^-\leq u \leq u^+$ on $\overline{\Omega}$, $u^\pm \in C(\overline{\Omega})$ and, $u^+=u^-=g$ on $\partial\Omega$.}} From Proposition \ref{stability}, this implies that $u$ is a subsolution of \eqref{Frac Inf Lap Eq} with $u=g$ on $\partial\Omega$. 


Let us show that $u$ is also a supersolution, so that it will be a viscosity solution. Assume that this is not the case, i.e. there is a point $x_0 \in \Omega$ and a function $\varphi \in C^1(\Omega) \cap C(\overline{\Omega})$ such that $u \geq \varphi$ on $\overline{\Omega}$ with $u(x_0)=\varphi(x_0)$ and 
\begin{equation} \label{3.2.1}
-L[\varphi](x_0) +f(\varphi(x_0))<0.
\end{equation}
We recall that we may assume $x_0$ to be the unique minimum of $u-\varphi$. Indeed, for $\delta>0$ small enough, set $\varphi_{\delta}(x)=\varphi(x)-\delta |x-x_0|^2$. Hence, we clearly have $\varphi_\delta \leq \varphi \leq u$ on $\overline{\Omega}$ with $\varphi_\delta(x_0)=\varphi(x_0)=u(x_0)$. Moreover, by Lemma \ref{lem:delta perturbation}, we have 
$$-L[\varphi_\delta](x_0) +f(\varphi_\delta(x_0)) \leq -L[\varphi](x_0)+C\delta +f(\varphi(x_0)) <0,$$
as soon as $\delta>0$ is sufficiently small. 

Now, we claim that $u(x_0)<u^+(x_0)$. Suppose it is not the case, i.e. we have \,$u(x_0)=u^+(x_0)$. Hence, we infer that $\varphi \leq u \leq u^+$ on $\overline{\Omega}$ with $u^+(x_0)=\varphi(x_0)$, and having $u^+$ is a viscosity supersolution, then 
$$-L[\varphi](x_0) +f(\varphi(x_0)) \geq 0,$$
which is a contradiction. 

Since $u^+$, $\varphi$ are continuous on $\Omega$, $\varphi(x_0)=u(x_0)<u^+(x_0)$, $\varphi \leq u \leq u^+$ and $x_0$ is the unique minimum of $u-\varphi$, then there will be a small constant $\zeta_0>0$ such that $\varphi+\zeta \leq u^+$ on $\overline{\Omega}$, for all $0<\zeta \leq \zeta_0$.
We set 
$$u_\zeta=\max\{u\,,\,\varphi + \zeta\}.$$
We shall prove that $u_\zeta \in S$, for $\zeta>0$ small enough. In this case, since $u \geq u_\zeta$ on $\overline{\Omega}$, one has in particular at $x=x_0$ that 
$$ u(x_0) \geq u_\zeta(x_0)=\varphi(x_0)+\zeta=u(x_0)+\zeta,$$
which is clearly a contradiction as $\zeta>0$. Hence, it remains to prove the claim that $u_\zeta \in S$. First, it is clear that $u^- \leq u \leq  u_\zeta \leq u^+$. Let us show that $u_\zeta$ is a subsolution. Assume it is not the case, so there exists a point  $x_\zeta \in \Omega$ and a function $\varphi_\zeta \in C^1(\Omega) \cap C(\overline{\Omega})$ such that \,$u_\zeta \leq \varphi_\zeta$\, on \,$\overline{\Omega}$\, and \,$u_\zeta(x_\zeta)=\varphi_\zeta(x_\zeta)$ with 
\begin{equation} \label{3.2.2}
 -L[\varphi_\zeta](x_\zeta)+f(\varphi_\zeta(x_\zeta))>0.
 \end{equation}
Here, we have two possibilities: either \,$u_\zeta(x_\zeta)=u(x_\zeta)$ or $u_\zeta(x_\zeta)=\varphi(x_\zeta)+\zeta$. If \,$u_\zeta(x_\zeta)=u(x_\zeta)$ for some $\zeta$, then we have $u(x_\zeta)=\varphi_\zeta(x_\zeta)$ and $u \leq u_\zeta \leq \varphi_\zeta$. But $u$ is a subsolution, then we must have  
$$ -L[\varphi_\zeta](x_\zeta)+f(\varphi_\zeta(x_\zeta)) \leq 0,$$
which is a contradiction. 

The remaining case is when $u_\zeta(x_\zeta)=\varphi(x_\zeta)+\zeta$ for all $\zeta$ small, so $\varphi_\zeta(x_\zeta)=\varphi(x_\zeta) +\zeta$. Since $\varphi+\zeta \leq u_\zeta \leq \varphi_\zeta$, then one has
$$ \varphi \leq \varphi_\zeta -\zeta \qquad \mbox{on}\,\,\,\,\,\overline{\Omega}.$$
Hence, we have 
$$L[\varphi](x_\zeta) \leq L[\varphi_\zeta-\zeta](x_\zeta)= L[\varphi_\zeta](x_\zeta).$$
In particular, we get that
$$- L[\varphi_\zeta](x_\zeta) +f(\varphi(x_\zeta))\leq -L[\varphi](x_\zeta) + f(\varphi(x_\zeta)).$$
Consequently, 
\begin{equation} \label{3.2.3}
[- L[\varphi_\zeta](x_\zeta)+f(\varphi_\zeta(x_\zeta)) ]+[f(\varphi(x_\zeta))-f(\varphi_\zeta(x_\zeta))]\leq -L[\varphi](x_\zeta) + f(\varphi(x_\zeta)).
\end{equation}\\
Recalling \eqref{3.2.2}, \eqref{3.2.3} yields to
\begin{equation} \label{3.2.4}
f(\varphi(x_\zeta))-f(\varphi_\zeta(x_\zeta))\leq -L[\varphi](x_\zeta) + f(\varphi(x_\zeta)).
\end{equation}
However, we claim that the sequence of points $x_\zeta$ converges to $x_0$. Otherwise, it means that up to a subsequence $x_\zeta \rightarrow x^\star \neq x_0$. But, we have 
$$u(x_\zeta) \leq u_\zeta(x_\zeta) = \varphi(x_\zeta) +\zeta,\,\,\,\mbox{for all}\,\,\,\zeta.$$
Letting $\zeta \to 0^+$, we infer that $u(x^\star) \leq \varphi(x^\star)$. Hence, $u(x^\star)=\varphi(x^\star)$ and $x^\star$ is a minimum point of $u- \varphi$. Yet, $x_0$ is the unique minimum point for $u- \varphi$ and so, $x^\star=x_0$. Yet, this is also a contradiction. So, our claim is proved. 

Passing to the limit in \eqref{3.2.4}, we get
$$0 \leq  -L[\varphi](x_0) + f(\varphi(x_0)).$$
But, this contradicts \eqref{3.2.1}. Hence, this concludes the proof that $u_\zeta \in S$. 
\end{proof}

We finish this section by the following observation that we will use when dealing with the obstacle problem.
\begin{remark} \rm\label{Remark 2}
   Assume the boundary datum $g \geq 0$ on $\partial\Omega$ (but, $g$ is not identically zero). Recalling Remark \ref{RK3.9},
   the supersolution $u^+$ constructed in Lemma \ref{Existence of subsupersolutions} is nonnegative. However, the subsolution $u^-$ defined in Lemma \ref{Existence of subsupersolutions} is not necessarily nonnegative.
   
   Now, assume $f= 0$ on ${{(-\infty,0]}}$. Then, there will always be a nonnegative subsolution $u^-$ such that $u^- \leq u^+$ on $\overline \Omega$ and $u^+=u^-=g$ on $\partial\Omega$. In fact, it is easy to see that $w^\star:=\max\{u^-,0\}$ is also a subsolution with $w^\star=g$ on $\partial\Omega$. Moreover, we have \,$w^\star \leq u^+$. Hence, { using the setting of Proposition \ref{existence of a solution}}, $w^\star \in S$. From the definition of the Perron's solution $u$, this yields that 
$$u=\sup_{w \in S} w \geq w^\star \geq 0.$$
Then, $u \geq 0$ on $\overline{\Omega}$.

Finally, assume that there is a point $x_0 \in \Omega$ such that $u(x_0)=0$. Since $u \neq 0$ and $u \in C(\overline{\Omega})$, then there is a point $x^\star \neq x_0 \in \Omega$ such that $u >\frac{u(x^\star)}{2}>0$ on $B(x^\star,\varepsilon)$, where $\varepsilon>0$ is small enough. Now, let $\varphi  \in C^1(\Omega) \cap C(\overline{\Omega})$ be such that $\varphi \neq 0$, $\mbox{supp}(\varphi) \subset B(x^\star,\varepsilon)$ and $0\leq \varphi \leq \frac{u(x^\star)}{4}$. In particular, we have $u \geq \varphi$ and $u(x_0)=\varphi(x_0)=0$. Therefore, we must have 
$$0<L[\varphi](x_0) \leq f(\varphi(x_0))= 0,$$
which is a contradiction.

Notice also that when $f=0$ on ${{(-\infty,0]}}$, we obtain from Proposition \ref{beta Holder regularity} the following uniform (does not depend on the solution $u$) estimate:
 $$||u||_{C^{0,\beta}(\overline\Omega)} \leq C\bigg(\alpha,\,\beta,\,\mbox{diam}(\Omega),\,||g||_{C^{0,\beta}(\partial\Omega)},\, f(||g||_\infty)\bigg).$$
\end{remark}
\section{Obstacle problem}\label{sec:obstacle}
In this section, we assume that $f:[0,\infty) \mapsto \mathbb{R}$ is continuous, nonnegative and non-decreasing, and the boundary datum $g$ is nonnegative and continuous on $\partial\Omega$. We prove Theorem \ref{Th1.2} by showing that there exists a nonnegative function $u$ that is 
solution to the following obstacle problem
\begin{equation}\label{N system with obstacle}
\begin{cases} L[u]=f(u)\qquad &\text{in\,\, $\{u>0\}$},\\
u=g\qquad &\text{on \,\,$\partial \Omega$}.\end{cases}
\end{equation}
\begin{proof}[Proof of Theorem \ref{Th1.2}]
In the case when $f(0)=0$, we extend $f$ by 0 on ${{(-\infty,0)}}$. Then, this extension (we still denote it by $f$) is continuous and non-decreasing on $\mathbb{R}$ and so, by Proposition \ref{existence of a solution}, Problem \eqref{Frac Inf Lap Eq} has a solution $u$. Thanks to Remark \ref{Remark 2}, $u>0$. Hence, $u$ solves Problem \eqref{N system with obstacle}.  

Now, we consider the case when $f(0)>0$. Assuming $\alpha<1$.
Let $f_\varepsilon$ be a sequence of non-decreasing continuous functions such that $f_\varepsilon=0$ on ${{(-\infty,0]}}$ and $f_\varepsilon=f$ on $[\varepsilon,+\infty)$.
For every $\varepsilon >0$, by Proposition \ref{existence of a solution}, we know that there exists a solution $u_\varepsilon$ to Problem \eqref{Frac Inf Lap Eq} with \,$u_\varepsilon=g$\, on \,$\partial\Omega$. Recalling Remark \ref{Remark 2}, we may assume that $u_\varepsilon>0$ on $\Omega$. In addition, by Proposition \ref{beta Holder regularity}, we have that
 $$||u_\varepsilon||_{C^{0,\beta}(\overline\Omega)}  \leq C\bigg(\alpha,\,\beta,\,\mbox{diam}(\Omega),\,||g||_{C^{0,\beta}(\partial\Omega)},\,f(||g||_\infty)\bigg)
 ,$$
 for $\varepsilon>0$ small enough. 
 
Hence, $(u_\varepsilon)_\varepsilon$ is bounded in $C^{0,\beta}(\overline\Omega)$. Therefore, up to a subsequence, $u_\varepsilon \rightarrow u$ 
 uniformly in \,$C^{0,\beta}(\overline\Omega)$\, and \,$u \geq 0$\, on \,$\overline{\Omega}$.

We will show  that $u$ is a viscosity subsolution to \eqref{N system with obstacle} (the fact that $u$ is a supersolution can be treated similarly). Therefore, $u$ will be a viscosity solution for Problem \eqref{N system with obstacle} with boundary datum $u=g$. Assume by contradiction that $u$ is not a subsolution then there exists $x_0\in\{u>0\}$ and a function $\varphi \in C(\overline{\Omega}) \cap C^1(\Omega)$ such that $u \leq \varphi$ on $\overline{\Omega}$ with $u(x_0)=\varphi(x_0)$ but
$$-L[\varphi](x_0) +f(\varphi(x_0)) > 0.$$
 Thanks to the uniform convergence of $u_\varepsilon$ to $u$, one can find a sequence $\varphi_\varepsilon$ converging uniformly to $\varphi$ such that $\varphi_\varepsilon \in C(\overline{\Omega}) \cap C^1(\Omega)$, \,$u_\varepsilon
\leq \varphi_\varepsilon$\, on \,$\overline{\Omega}$\, and $u_\varepsilon(x_\varepsilon)=\varphi_\varepsilon(x_\varepsilon)$, where $x_\varepsilon \to x_0$ when $\varepsilon \to 0$. Since $u_\varepsilon$ is a viscosity solution, then 
\begin{equation} \label{5.0.1}
-L[\varphi_\varepsilon](x_\varepsilon) +f_\varepsilon(\varphi_\varepsilon(x_\varepsilon))\leq 0.
\end{equation}
Yet, 
$$L[\varphi_\varepsilon](x_\varepsilon)=\sup_{y\in \bar \Omega,\,y\neq x_\varepsilon} \dfrac{\varphi_\varepsilon(y)-\varphi_\varepsilon(x_\varepsilon)}{|y-x_\varepsilon|^{\alpha}}+\inf_{y\in \bar \Omega,\,y\neq x_\varepsilon} \dfrac{\varphi_\varepsilon(y)-\varphi_\varepsilon(x_\varepsilon)}{|y-x_\varepsilon|^{\alpha}}.$$
We claim that 
$$|L[\varphi_\varepsilon](x_\varepsilon) -L[\varphi](x_\varepsilon)|\leq C ||\varphi_\varepsilon -\varphi||_\infty^{1-\alpha}.$$
We will show this inequality for $L^+$ (the proof for $L^-$ will be similar and so, it will be omitted). First, one has
$$\dfrac{\varphi_\varepsilon(y)-\varphi_\varepsilon(x_\varepsilon)}{|y-x_\varepsilon|^{\alpha}}=\dfrac{\varphi(y)-\varphi(x_\varepsilon)}{|y-x_\varepsilon|^{\alpha}} +\dfrac{\varphi_\varepsilon(y)-\varphi(y) +\varphi(x_\varepsilon)-\varphi_\varepsilon(x_\varepsilon)}{|y-x_\varepsilon|^{\alpha}}.$$
But, it is clear that
$$ \dfrac{\varphi_\varepsilon(y)-\varphi(y) +\varphi(x_\varepsilon)-\varphi_\varepsilon(x_\varepsilon)}{|y-x_\varepsilon|^{\alpha}} \leq 2\, 
\min\bigg\{\dfrac{||\varphi_\varepsilon-\varphi||_\infty}{|y-x_\varepsilon|^{\alpha}},C\,|y-x_\varepsilon|^{1-\alpha}\bigg\},
$$
where $C<\infty$ is a uniform constant such that $\mbox{Lip}(\varphi_\varepsilon),\,\mbox{Lip}(\varphi) \leq C$ on $\overline{B(x_0,\delta)}$, for $\delta>0$ small enough.
Then, we get that
$$ \dfrac{\varphi_\varepsilon(y)-\varphi(y) +\varphi(x_\varepsilon)-\varphi_\varepsilon(x_\varepsilon)}{|y-x_\varepsilon|^{\alpha}} \leq C\, 
\min\bigg\{\dfrac{||\varphi_\varepsilon-\varphi||_\infty}{|y-x_\varepsilon|^{\alpha}},\,|y-x_\varepsilon|^{1-\alpha}\bigg\}\leq C ||\varphi_\varepsilon-\varphi||_\infty^{1-\alpha}.
$$
On the other hand, it is clear that $\varphi_\varepsilon(x_\varepsilon) \rightarrow \varphi(x_0)>0$ and so, $f_\varepsilon(\varphi_\varepsilon(x_\varepsilon))=f(\varphi_\varepsilon(x_\varepsilon)) \rightarrow f(\varphi(x_0))$. Hence, thanks to Lemma \ref{lem:continuity} and passing to the limit when $\varepsilon \to 0$ in \eqref{5.0.1}, we infer that 
$$-L[\varphi](x_0) +f(\varphi(x_0))\leq 0,$$
which contradicts our main assumption.

{{Thanks to Proposition \ref{alpha Holder reg} and since $f_\varepsilon \geq 0$, then $u_{\epsilon}$ are uniformly (in $\epsilon$) locally $\alpha-$H\"older continuous. Moreover, one has
 $$[u_\varepsilon]_{C^{0,\alpha}(\omega)} \leq \frac{2||u_\varepsilon||_\infty}{\dist(\omega,\partial\Omega)^\alpha} \leq \frac{C}{\dist(\omega,\partial\Omega)^\alpha},\qquad\mbox{for every}\,\,\,w \subset\subset \Omega.$$
 Then, letting $\epsilon\to 0$, we infer that the limit function $u$ is locally $\alpha-$H\"older continuous. }}
\end{proof}
{{
\begin{remark}
    We note that it is not clear whether we can  prove global $\alpha-$H\"older regularity on the solution $u$ in the previous theorem or not, unless $f=0$. Indeed, recalling the proof of Proposition \ref{beta Holder regularity}, the key idea is to show that, at each boundary point $x_0 \in \partial\Omega$, the function $u_\varepsilon$  admits two barrier functions from above and below of the form
    $w^\pm(x)=g(x_0)\pm C\,|x-x_0|^\alpha$. The existence of a barrier function $w^+$ from above is not an issue (see the proof of Proposition \ref{beta Holder regularity}). However, the difficulty arises when showing a barrier function $w^-$ from below since now (i.e. when $\beta=\alpha$) we lose the upper bound $L[|x-x_0|^\beta] \leq -c<0$ (which is true if $\beta<\alpha$; see Proposition \ref{prop:alpha>beta}). In fact, set 
    $$w^-(x)=g(x_0)-C\,|x-x_0|^\alpha.$$
    We have 
    $$ -L[w^-]+f(w^-)=C\,L[|x-x_0|^\alpha]+f(g(x_0)-C\,|x-x_0|^\alpha).$$
    By Proposition \ref{alpha=beta}, we get
    $$ -L[w^-]+f(w^-) \leq 
    -1+\dfrac{ \left(\dfrac{\diam(\Omega)}{|x-x_0|}\right)^\alpha-1}{\left(\dfrac{\diam (\Omega)}{|x-x_0|}-1\right)^{\alpha}} +f(g(x_0)-C\,|x-x_0|^\alpha).
    $$
    But, the right hand side in the inequality above is not strictly negative as soon as $x$ is close enough to $x_0$ and $f(g(x_0))>0$. Hence, it is not clear whether $w^-$ is a strict subsolution to \eqref{Frac Inf Lap Eq} or not.
\end{remark}
\begin{remark}
    We note that the solution $u$ of the obstacle problem \eqref{N system with obstacle} is not necessarily strictly positive. Thus, an interesting question will be to study the properties of the free boundary $\partial\{u>0\}$, 
    this remains open. 
    
    On the other hand, regularity results of the solution on the free boundary for the same problem \eqref{N system with obstacle} but with infinity (instead of fractional) Laplacian has been analyzed for example in \cite{Diehl} through a scaling argument that could not be adapted to our  case since the operator $L$ is non-local. Finally, we mention that for the infinity Laplacian we obtain
higher regularity on the solution across the free boundary. However, it is not clear if this will be also the case when dealing with fractional infinity Laplacian. 
\end{remark}
}}
{
\section*{Acknowledgement} The authors would like to thank the referee for all their comments and in particular for their suggestions to refine our results under weaker regularity assumptions on the boundary datum $g$.
}

\end{document}